\documentclass[preprint,12pt]{elsarticle}

\usepackage{amsmath, amssymb, amsthm}
\usepackage{graphicx}
\usepackage{hyperref}

\newtheorem{theorem}{Theorem}[section]
\newtheorem{lemma}[theorem]{Lemma}
\newtheorem{proposition}[theorem]{Proposition}
\newtheorem{corollary}[theorem]{Corollary}

\theoremstyle{definition}
\newtheorem{definition}[theorem]{Definition}

\theoremstyle{remark}

\numberwithin{equation}{section}

\begin{document}
%================================================

\begin{frontmatter}

\title{Commuting Graph of Unitriangular Group \(UT(4,p)\)}

\author[1,2]{Miseriya Majeed\corref{cor1}}
\cortext[cor1]{Corresponding author}
\ead{miseriyamajeed@gmail.com}   % email of corresponding author
\author[1,2]{Ramkumar P.B}
\ead{ramkumar_pb@rajagiritech.edu.in}

\affiliation[1]{
organization={Rajagiri School of Engineering and Technology},
city={Cochin},
country={India}
}

\affiliation[2]{
organization={APJ Abdul Kalam Technological University},
city={Thiruvananthapuram},
country={India}
}

\begin{abstract}
Let \(G=UT(4,p)\) be the group of all \(4\times 4\) unitriangular matrices over the finite field \(\mathbb F_p\), where \(p\) is a prime. Using the six-parameter form of the elements of \(G\), we describe the commutativity relation explicitly and use it to analyse the structure of the graph. We prove that the reduced commuting graph is connected and has diameter \(3\). We also determine the size of its maximal cliques, chromatic number, independence number, perfectness, etc. by decomposing the graph into cosets, layers, and direction parts.
\end{abstract}

\begin{keyword}
Commuting graph  \sep Unitriangular group \sep Clique number \sep Chromatic number

\MSC[2020] 05C25 \sep 20D15 \sep 05C69
\end{keyword}

\end{frontmatter}
\section{Introduction}

The commuting graph of a finite group is a natural way to study commutativity by graph-theoretic methods. Over the years, this graph has been studied for many families of finite groups, and graph-theoretic properties such as connectedness, diameter, clique number, and perfectness have been studied. Iranmanesh and Jafarzadeh studied the commuting graphs of symmetric and alternating groups and showed that, the graph is either disconnected or has bounded diameter.\cite{IranmaneshJafarzadeh2008}. In a different direction, Giudici and Pope investigated commuting graphs of linear groups and matrix rings over the integers modulo \(m\), and proved connectedness and diameter results \cite{GiudiciPope2010}. Later, Morgan and Parker showed that for a finite group with trivial centre, every connected component of the commuting graph has diameter at most \(10\) \cite{MorganParker2013}. These results show that diameter questions are central in the theory.

Beyond connectedness and diameter, commuting graphs have also been studied through their structural graph-theoretic properties. Britnell and Gill studied perfect commuting graphs and classified the finite quasisimple groups whose commuting graphs are perfect \cite{BritnellGill2017}. More recently, Arvind, Cameron, Ma and Maslova considered broader aspects of commuting graphs, including recognition questions and graph classes such as cographs and chordal graphs \cite{ArvindCameronMaMaslova2023}. 
There are also related works in matrix and ring settings. Akbari, Bidkhori and Mohammadian studied commuting graphs of matrix algebras \cite{AkbariBidkhoriMohammadian2008}, while Abdollahi considered commuting graphs of full matrix rings over finite fields \cite{Abdollahi2008}. 

Abdollahi, Akbari and Maimani introduced the non-commuting graph of a group and studied how graph-theoretic properties of this graph are related to the structure of the group \cite{AbdollahiAkbariMaimani2006}. Darafsheh, Ghorbani and Prajapati studied maximal subsets of pairwise non-commuting elements in finite \(p\)-groups \cite{DarafshehGhorbaniPrajapati2015}, while Azad, Iranmanesh, Praeger and Spiga considered similar questions for finite general linear groups through abelian coverings and non-commuting sets \cite{AzadIranmaneshPraegerSpiga2010}.

For unitriangular groups, however, the literature is more closely tied to subgroup structure and automorphisms than to a detailed study of the commuting graph in small dimension. In particular, Mahalanobis studied the automorphism group of the group of unitriangular matrices over a field and emphasized the role of maximal abelian normal subgroups in understanding its structure \cite{Mahalanobis2013}. This is especially relevant here, since large cliques in the commuting graph arise naturally from abelian subgroups. A closely related work is that of Kumar and Prajapati on maximal non-commuting sets in certain unipotent upper-triangular linear groups \cite{KumarPrajapati2017}. Their work is relevant to the present paper because maximal non-commuting subsets are naturally connected with independence problems in commuting graphs. They decompose the relevant vertices into non-commuting parts and use this structure to study the corresponding independence number. In our setting, the subset \(\mathcal L_0^{*}\) corresponds to the union \(N_2\cup N_3\cup N_3^{\mathrm{anti}}\) in their notation. Thus the problem is closely related, although the method used here, namely the coset and layer decomposition, is different and yields a different bound.

In this paper we study the reduced commuting graph of \(UT(4,p)\). The case \(UT(4,p)\) is small enough that the commutation relations can be written down explicitly, but at the same time it is rich enough to exhibit nontrivial graph-theoretic behaviour. Our approach is based on an explicit decomposition of the graph using the coordinate description of elements of \(UT(4,p)\). This leads naturally to cosets, layers, direction parts, and giant cliques, and these pieces together reveal much of the global structure of the graph.

The paper is organized as follows. Section~2 contains the preliminaries and the basic commutativity relations in \(UT(4,p)\). Section~3 establishes the basic graph-theoretic properties of the reduced commuting graph, including connectivity, diameter, and clique number. Section~4 studies maximal abelian subgroups and uses them to describe maximal cliques. Section~5 develops the coset and layer decomposition, and analyses the interaction between direction parts and zero-layers. Section~6 is devoted to the chromatic number and the independence number, with special attention to the zero-layer union \(\mathcal L_0^*\) and its clique decomposition.
\section{Preliminaries}
\begin{definition}
    Let $G$ be a finite non--abelian group. The commuting graph of $G$, denoted by $\Gamma(G)$, is the graph whose vertex set is $G$, in which two distinct vertices $x$ and $y$ are adjacent if and only if $xy = yx$.
\end{definition}
For any element $x\in G$, the centralizer of $x$ in $G$ is
$C_G(x)=\{g\in G \mid gx=xg\}$.
In this paper, we also consider the reduced commuting graph, denoted by $\Gamma_{\mathrm{red}}(G)$, which is obtained by removing the center $Z(G)$ in $\Gamma(G)$.
Let $\Gamma=(V,E)$ be a graph. The minimum and maximum degrees of $\Gamma(G)$ is denoted by $\delta(\Gamma(G))$ and $\Delta(\Gamma(G))$ respectively. The diameter, $diam(G)$ of a connected graph is the maximum distance between two vertices. A clique is a subset of vertices such that every pair of vertices in the subset is adjacent. The clique number of a graph $\Gamma(G)$, denoted by $\omega(\Gamma(G))$, is the size of the largest clique in $\Gamma(G)$. An independent set is a set of vertices, no two of which are adjacent. The independence number of $\Gamma(G)$, denoted by $\alpha(\Gamma(G))$, is the maximum size of an independent set. The chromatic number $\chi(\Gamma(G))$ is the minimum number of colors required to color the vertices so that the adjacent vertices receive different colors. A graph \(\Gamma(G)\) is called \emph{perfect} if, for every induced subgraph
\(\Lambda\) of \(\Gamma(G)\), the chromatic number of \(\Lambda\) is equal to its
clique number; that is,
\(
\chi(\Lambda)=\omega(\Lambda).
\) These parameters are studied for the reduced commuting graph of $UT(4,p)$ in the subsequent sections.
\begin{definition}
     Let $p$ be a prime and let $\mathbb{F}_p$ denote the finite field with $p$ elements. The unitriangular group $UT(4,p)$ is the group of all $4 \times 4$ upper triangular matrices over $\mathbb{F}_p$ with all diagonal entries equal to $1$.
\end{definition}
Throughout this paper, we write $G=UT(4,p)$ and every element of $UT(4,p)$ can be written in the form
\[
\begin{pmatrix}
1 & a & b & c \\
0 & 1 & d & e \\
0 & 0 & 1 & f \\
0 & 0 & 0 & 1
\end{pmatrix}
\]
where $a,b,c,d,e,f \in \mathbb{F}_p$.
For convenience we identify this matrix with the $6$--tuple $(a,b,c,d,e,f)$. For $x=(a,b,c,d,e,f)$ and $y=(A,B,C,D,E,F),$
The group multiplication is given by
\[
xy
=
(a + A,\; b + B + aD,\; c + C + aE + bF,\; d + D,\; e + E + dF,\; f + F).
\]
Thus $|UT(4,p)| = p^6$.
The center of a group $G$ is defined by
$
Z(G)=\{x\in G \mid xy=yx \text{ for all } y\in G\}.
$ For the group $UT(4,p)$,
$
Z(G)=\{(0,0,c,0,0,0)\mid c\in \mathbb{F}_p\},
$and therefore $|Z(G)|=p$.
The commuting graph encodes the commutativity relations among elements of the group. Also,
$xy = yx$ if and only if
\begin{equation}{\label{commuting1}}
aD = Ad,
\end{equation}

\begin{equation}{\label{commuting2}}
dF = Df,
\end{equation}

\begin{equation}{\label{commuting3}}
aE + bF = Ae + Bf.
\end{equation}
These are the three commuting equations that determine adjacency in the commuting graph of $UT(4,p)$.

\textbf{Centralizers:} Solving the commuting equations for a general element of \(UT(4,p)\) gives the possible orders of centralizers listed in Table \ref{tab:centralizer_conditions}.
\begin{table}[h]
\centering
\begin{tabular}{|c|l|}
\hline
$|C_G(x)|$ & Condition on the element $x=(a,b,c,d,e,f)$ \\ \hline

$p^6$ & $a=b=d=e=f=0$ \\ \hline

$p^5$ & $a=d=f=0,\ (b,e)\neq(0,0)$ \\ \hline

$p^3$ & $d\neq0,\ (a,f)\neq(0,0)$ \\ \hline

$p^4$ & otherwise, 1.\ $d\neq0,\ (a,f)=(0,0)$ \\
      &2. $d=0,\ (a,f)\neq(0,0)$ \\ \hline

\end{tabular}
\caption{Possible orders of the centralizer $C_G(x)$ in $UT(4,p)$.}
\label{tab:centralizer_conditions}
\end{table}
These values follow directly from equations \eqref{commuting1}, \eqref{commuting2} and \eqref{commuting3}. The first row corresponds to the central elements of \(G\). The remaining rows are obtained by solving the three linear conditions imposed on a general element \((A,B,C,D,E,F)\in G\) which commutes with \(x=(a,b,c,d,e,f)\).
\begin{lemma}
    $\delta(\Gamma(G))=p^3-1$ and $\Delta(\Gamma(G))=p^6-1$
\end{lemma}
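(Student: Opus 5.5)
The plan is to reduce the statement to the centralizer orders in Table~\ref{tab:centralizer_conditions}. In the commuting graph $\Gamma(G)$, a vertex $x$ is adjacent exactly to the elements of $C_G(x)\setminus\{x\}$, because $x\in C_G(x)$ and there are no loops. Hence
\[
\deg(x)=|C_G(x)|-1 .
\]
This turns the minimum and maximum degrees into the minimum and maximum of $|C_G(x)|$ over $x\in G$, minus one.

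For the maximum, I take any central element, for instance the identity $(0,0,0,0,0,0)$. Its centralizer is all of $G$, so its degree is $p^6-1$. No vertex can have larger degree, since the graph has only $p^6$ vertices. This gives $\Delta(\Gamma(G))=p^6-1$.

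For the minimum, I need two things.

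\emph{The bound $p^3$ is attained.} Take $x=(1,0,0,1,0,0)$, which has $d\neq 0$ and $a\neq 0$. Solving equations \eqref{commuting1}--\eqref{commuting3} directly:
\begin{itemize}
\item \eqref{commuting1} gives $D=A$;
\item \eqref{commuting2} gives $F=0$;
\item \eqref{commuting3} gives $E=0$.
\end{itemize}
The parameters $A,B,C$ remain free, so $|C_G(x)|=p^3$.

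\emph{No centralizer is smaller than $p^3$.} This is the step that needs some care, because Table~\ref{tab:centralizer_conditions} is presented as the list of all possible orders. I will not rely on the table being exhaustive; instead I give a uniform argument. For fixed $x$, equations \eqref{commuting1}--\eqref{commuting3} are three homogeneous linear equations in the six unknowns $(A,B,C,D,E,F)\in\mathbb F_p^6$. The unknown $C$ does not appear in any of them. The solution space of three linear equations in six unknowns therefore has dimension at least $6-3=3$. Consequently $|C_G(x)|\ge p^3$ for every $x$, and $\delta(\Gamma(G))\ge p^3-1$.

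Combining this with the example $x=(1,0,0,1,0,0)$ gives $\delta(\Gamma(G))=p^3-1$. The only real obstacle is making sure the lower bound holds for every $x$, and the dimension count settles it cleanly.
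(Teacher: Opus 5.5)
Your proof is correct, and it differs from the paper's only in how the lower bound is justified.

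The paper writes out no proof of this lemma. It is meant to be read off from Table~\ref{tab:centralizer_conditions} using $\deg(x)=|C_G(x)|-1$: the smallest centralizer order listed is $p^3$ and the largest is $p^6$. Your argument for $\Delta$ and for attaining $p^3$ is the same. Your witness $(1,0,0,1,0,0)$ is exactly the element $y$ in the paper's diameter theorem, where $C_G(y)=\{(A,B,C,A,0,0)\}$ is computed.

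For the lower bound $|C_G(x)|\ge p^3$, the paper relies on the case-by-case classification behind the table. You replace it with a single rank count. Since \eqref{commuting1}--\eqref{commuting3} are three homogeneous linear conditions on $(A,B,C,D,E,F)$, every centralizer is a subspace of $\mathbb F_p^6$ of dimension at least $3$. This is shorter, and it does not depend on the table being exhaustive, which the paper asserts with only a brief sketch.

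What the table gives that your argument does not is the full degree spectrum $\{p^3-1,\,p^4-1,\,p^5-1,\,p^6-1\}$ of $\Gamma(G)$, together with exactly which elements realize each value. Your remark that $C$ does not appear in the equations is true but is not needed for the count.
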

\begin{lemma}{\label{clique and subgroup}}{\cite{ArvindCameronMaMaslova2023}}
 A vertex subset $S$ of $G$ is a maximal clique of $\Gamma(G)$ if and only if $S$ is a maximal abelian subgroup of $G$. 
 
\end{lemma}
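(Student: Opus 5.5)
The plan is to argue directly from the definitions, for an arbitrary finite group \(G\) (in particular for \(G=UT(4,p)\)). The only tool needed is one elementary observation: \emph{if \(T\subseteq G\) is a set of pairwise commuting elements, then the subgroup \(\langle T\rangle\) is abelian.} To see this, note that each \(t\in T\) commutes with every element of \(T\). Hence \(t\in C_G(s)\) for all \(s\in T\), so \(T\subseteq\bigcap_{s\in T}C_G(s)\). Since this intersection is a subgroup, \(\langle T\rangle\subseteq \bigcap_{s\in T}C_G(s)\), i.e. every element of \(\langle T\rangle\) centralises \(T\). Applying the same reasoning once more, every element of \(\langle T\rangle\) centralises \(\langle T\rangle\). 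Consequently \(\langle T\rangle\) is abelian, and in particular it is a clique of \(\Gamma(G)\) containing \(T\). (Adjacency is defined only between distinct vertices, so a set of vertices is a clique exactly when its elements pairwise commute.)

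For the forward direction, let \(S\) be a maximal clique. By the observation, \(\langle S\rangle\) is a clique containing \(S\), so maximality forces \(S=\langle S\rangle\). Thus \(S\) is an abelian subgroup. Now suppose \(S\subseteq H\) for some abelian subgroup \(H\). Then \(H\) is itself a clique containing \(S\), so \(H=S\). Hence \(S\) is a maximal abelian subgroup.

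For the converse, let \(S\) be a maximal abelian subgroup. Its elements pairwise commute, so \(S\) is a clique. Suppose \(S\subseteq T\) for some clique \(T\). By the observation, \(\langle T\rangle\) is an abelian subgroup containing \(S\), so maximality of \(S\) gives \(\langle T\rangle=S\). Therefore \(T\subseteq S\), whence \(T=S\), and \(S\) is a maximal clique.

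There is no real obstacle here; the only point requiring care is the closure step, namely that pairwise commutativity passes from \(T\) to \(\langle T\rangle\), which is handled by the centraliser argument above. I would also remark that every maximal clique therefore contains \(Z(G)\), since a maximal abelian subgroup \(S\) satisfies \(SZ(G)=S\). As a consequence, maximal cliques of \(\Gamma_{\mathrm{red}}(G)\) are exactly the sets \(S\setminus Z(G)\) with \(S\) maximal abelian, and this is the form in which the lemma will be used later.
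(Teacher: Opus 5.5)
Your proof is correct. The paper does not prove this lemma: it quotes it from \cite{ArvindCameronMaMaslova2023} and uses it as a black box. So your proposal is a self-contained substitute for the citation rather than a different route through the same proof.

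The one substantive step is that a pairwise commuting set $T$ generates an abelian subgroup, because $\langle T\rangle\subseteq C_G(T)=C_G(\langle T\rangle)$. That is the standard reason behind the cited result. Your maximality argument is complete in both directions. It uses nothing about $UT(4,p)$, or even finiteness, which suits a general fact. What your version buys is that the paper's later applications no longer rest on an external reference. These are the equality $\omega(\Gamma(G))=p^4$ via Lemma~\ref{p4}, and the $p^3$/$p^4$ classification of maximal cliques via Lemma~\ref{lemmac}.

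One correction to your closing remark: the paper applies the lemma to $\Gamma(G)$ itself, not to $\Gamma_{\mathrm{red}}(G)$. Your reduced-graph statement is still right, but it needs a line of justification you do not give. The maps $T\mapsto T\cup Z(G)$ and $S\mapsto S\setminus Z(G)$ are mutually inverse bijections between the maximal cliques of $\Gamma_{\mathrm{red}}(G)$ and those of $\Gamma(G)$:
\begin{itemize}
\item Any vertex that enlarges the clique $T\cup Z(G)$ is noncentral, so it would enlarge $T$.
\item $Z(G)\subsetneq S$ for every maximal abelian $S$ because $G$ is non-abelian, so $S\setminus Z(G)$ is nonempty.
\end{itemize}
With that in place, Lemma~\ref{p4} yields $\omega(\Gamma_{\mathrm{red}}(G))=p^4-p$.
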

%================================================
\section{Structural Properties}
In this section, we prove some initial structural results for the reduced commuting graph of $UT(4,p)$. These include connectivity, diameter, and related clique-theoretic properties.
\begin{proposition}\label{connected}
$\Gamma_{red}(G)$ is connected
\end{proposition}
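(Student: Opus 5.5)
We connect every non-central vertex to a fixed hub. The natural hubs are the elements of the second row of the centralizer table: elements $z=(0,b,c,0,e,0)$ with $(b,e)\neq(0,0)$, which have centralizers of order $p^5$. Such an element is non-central, so it is a vertex of $\Gamma_{red}(G)$. By \eqref{commuting1}--\eqref{commuting3} with $a=d=f=0$, the element $z$ commutes with $y=(A,B,C,D,E,F)$ if and only if
\[
Ae+Bf=aE+bF \iff Ae=bF .
\]

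First we show that the hubs form a clique. Take two hubs $(0,b,c,0,e,0)$ and $(0,B,C,0,E,0)$. All three equations hold trivially, since $a=d=f=A=D=F=0$. So the set $H=\{(0,b,c,0,e,0):(b,e)\neq(0,0)\}$ induces a complete subgraph of $\Gamma_{red}(G)$.

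Next we show that every vertex outside $H$ is adjacent to some hub. Let $y=(A,B,C,D,E,F)$ be a non-central element not in $H$, so $(A,D,F)\neq(0,0,0)$. We need $(b,e)\neq(0,0)$ with $Ae=bF$.
\begin{itemize}
\item If $F=0$, take $b=1$, $e=0$. Then $Ae=0=bF$.
\item If $F\neq 0$, take $e=1$ and $b=AF^{-1}$. Then $Ae=A=bF$.
\end{itemize}
In either case $y$ is adjacent to a hub $h\in H$. We must also check $h\neq y$. This holds because $y\notin H$, while $h\in H$.

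Combining the two steps, any two vertices $u,v$ of $\Gamma_{red}(G)$ are joined by a path $u - h_1 - h_2 - v$ with $h_1,h_2\in H$, possibly shorter. Hence $\Gamma_{red}(G)$ is connected. This also gives the upper bound $\operatorname{diam}\le 3$, which the later diameter result uses.

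There is no real obstacle here. The only points needing care are that $H$ is nonempty and consists of non-central elements, and that the chosen hub is distinct from the vertex. Both are immediate from the definition of $H$.
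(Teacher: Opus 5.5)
Your proof is correct and follows the paper's argument. Both use a clique of non-central ``hub'' vertices to which every non-central element is adjacent, which gives paths of length at most $3$. The only difference is the hub set: you use $\{(0,b,c,0,e,0):(b,e)\neq(0,0)\}$ in place of the paper's abelian subgroup $H=\{(0,b,c,d,e,0)\}$. This is harmless, because in the paper's Case 2 the condition $D=0$ forces the chosen hub into your set anyway. Your explicit choices of $(b,e)\neq(0,0)$ also make the non-centrality of the hub clear, a point the paper's ``choose any $B$'' glosses over.
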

\begin{proof}
Let \(H=\{(0,b,c,d,e,0)\;:\; b,c,d,e \in \mathbb{F}_p \}\)
We have to prove that every non-central element $x=(a,b,c,d,e,f)$ commutes with some non-central element of $H$. Let \(h=(0,B,C,D,E,0)\) $\in$ \(H\).\\ \textbf{Case 1:} if \(x \in H\) then \(x\) commutes with \(H\), since \(H\) is Abelian. \\
\textbf{Case 2:}if $x \notin H$, then $(a,f) \neq (0,0)$. From $aD = 0$ and $Df = 0$, and since at least one of $a,f$ is nonzero, we obtain $D = 0$. The remaining condition is $aE = Bf$.
If $a \neq 0$, choose any $B \in \mathbb{F}_p$ and set $E = \frac{f}{a}B$.
If $a = 0$, then $f \neq 0$. The equation becomes $0 = Bf$, so take $B = 0$ and choose any $E \in \mathbb{F}_p$.
In both cases $C$ is arbitrary. Thus there exists $h \in H$ such that $xh = hx$. Therefore $x$ is adjacent to some element of $H$.
\end{proof}

\begin{proposition}\label{diameter}
\(
\operatorname{diam}(\Gamma_{\mathrm{red}}(G))\le 3.
\)
\end{proposition}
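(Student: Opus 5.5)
The plan is to refine the idea behind Proposition~\ref{connected}: every non-central vertex $x$ has a non-central neighbour inside the abelian subgroup $H=\{(0,b,c,d,e,0)\}$. Since $H$ is abelian, any two non-central elements of $H$ are adjacent, which already gives distance at most $3$. For non-central $x,y\in G$ we choose non-central $h_x,h_y\in H$ with $h_x$ commuting with $x$ and $h_y$ commuting with $y$. Then $x - h_x - h_y - y$ is a walk of length at most $3$ in $\Gamma_{\mathrm{red}}(G)$, because $h_x$ and $h_y$ commute and are both non-central. Repeated vertices only shorten the walk, and if $h_x=h_y$ the distance is at most $2$. It follows that $d(x,y)\le 3$ for all vertices, so $\operatorname{diam}(\Gamma_{\mathrm{red}}(G))\le 3$.

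The only real work is checking that the neighbour produced in the proof of Proposition~\ref{connected} can always be taken \emph{non-central}, that is, $(B,D,E)\neq(0,0,0)$ with $h=(0,B,C,D,E,0)$. I would verify this case by case.

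\textbf{Case 1: $x\in H$ non-central.} Here $x$ itself is non-central, so we may take $h_x=x$ and use the walk $x - h_y - y$, or any other non-central element of $H$.

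\textbf{Case 2: $(a,f)\neq(0,0)$, so $x\notin H$.} If $a\neq 0$, take $B=1$ and $E=f/a$, with $D=0$. This gives $h=(0,1,0,0,f/a,0)$, which is non-central because $B\neq0$. If $a=0$, then $f\neq0$, and we take $B=0$, $D=0$, $E=1$. This gives $h=(0,0,0,0,1,0)$, which is non-central because $E\neq0$.

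In both subcases we check the three commuting equations \eqref{commuting1}--\eqref{commuting3} directly. Equations \eqref{commuting1} and \eqref{commuting2} hold because $D=0$ and $A=F=0$. Equation \eqref{commuting3} reduces to $aE=Bf$, which holds by the choice of $B$ and $E$. Finally, $h\neq x$ because $x\notin H$, so $h$ is a genuine neighbour of $x$.

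I expect no serious obstacle. The one point needing care is that the membership criterion for $Z(G)$ stated in the Preliminaries, $a=b=d=e=f=0$, is used to certify that each chosen $h$ is non-central.
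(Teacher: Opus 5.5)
Your proof is correct and follows the same route as the paper: each non-central vertex gets a neighbour in the abelian subgroup $H$, which gives a path $x\sim h_1\sim h_2\sim y$ of length at most $3$. Your explicit choices $h=(0,1,0,0,f/a,0)$ and $h=(0,0,0,0,1,0)$ also make precise that the neighbour in $H$ is non-central, which the paper's proof of Proposition~\ref{connected} leaves implicit, since its ``choose any $B$'' (or ``any $E$'') would permit a central choice.
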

\begin{proof}
For every \(x,y\in \Gamma_{\mathrm{red}}(G)\), by Proposition \ref{connected} there exist \(h_1,h_2\in H\) such that \(x\sim h_1\) and \(y\sim h_2\). Since \(H\) is abelian, \(h_1\sim h_2\). Hence
\(
x\sim h_1\sim h_2\sim y
\)
is a path of length at most \(3\). Therefore
\(
\operatorname{diam}(\Gamma_{\mathrm{red}})\le 3.
\)
\end{proof}
\begin{theorem}
\(
diam(\Gamma_{\text{red}}(G))=3
\)
\end{theorem}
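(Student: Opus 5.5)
The plan is to combine Proposition~\ref{diameter}, which already gives \(\operatorname{diam}(\Gamma_{\mathrm{red}}(G))\le 3\), with an explicit pair of non-central vertices \(x,y\) whose distance is at least \(3\). Two vertices of \(\Gamma_{\mathrm{red}}(G)\) are at distance at least \(3\) exactly when they are not adjacent and have no common non-central neighbour. So it suffices to find non-central \(x,y\) with \(xy\neq yx\) and \(C_G(x)\cap C_G(y)=Z(G)\).

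Elements with \(d\neq 0\) and \((a,f)\neq(0,0)\) have the smallest centralizers (order \(p^3\) by Table~\ref{tab:centralizer_conditions}), so I would pick the two witnesses from this class. Take
\[
x=(1,0,0,1,0,0),\qquad y=(0,0,0,1,0,1).
\]

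The first step is to compute \(C_G(x)\) from the commuting equations \eqref{commuting1}--\eqref{commuting3}, with \(g=(A,B,C,D,E,F)\):
\begin{itemize}
\item equation \eqref{commuting1} gives \(D=A\);
\item equation \eqref{commuting2} gives \(F=0\);
\item equation \eqref{commuting3} gives \(E=0\).
\end{itemize}
Hence \(C_G(x)=\{(A,B,C,A,0,0)\}\), which has order \(p^3\) as expected.

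The second step is the same computation for \(y\):
\begin{itemize}
\item equation \eqref{commuting1} gives \(A=0\);
\item equation \eqref{commuting2} gives \(F=D\);
\item equation \eqref{commuting3} gives \(B=0\).
\end{itemize}
Hence \(C_G(y)=\{(0,0,C,D,E,D)\}\).

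Intersecting the two centralizers forces \(A=0\), then \(D=A=0\), and also \(B=E=F=0\). So \(C_G(x)\cap C_G(y)=\{(0,0,C,0,0,0)\}=Z(G)\), and \(x\) and \(y\) have no common neighbour in \(\Gamma_{\mathrm{red}}(G)\).

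The third step checks non-adjacency: equation \eqref{commuting1} for the pair \((x,y)\) reads \(1\cdot 1=0\cdot 1\), which is false. Therefore \(d(x,y)\ge 3\), and with Proposition~\ref{diameter} the diameter is exactly \(3\).

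The only real obstacle is choosing witnesses whose centralizers meet only in the centre. Elements with large centralizers, such as those in \(H\), are adjacent to too much, so I would restrict to the \(p^3\)-centralizer class. Within that class, one witness is taken with \(f=0\) and the other with \(a=0\), so that \eqref{commuting1} and \eqref{commuting2} kill complementary coordinates. After that choice, everything is a direct verification with the three commuting equations.
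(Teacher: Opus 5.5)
Your proof is correct and follows the paper's argument: you combine the upper bound of Proposition~\ref{diameter} with two non-commuting, non-central witnesses whose centralizers intersect exactly in $Z(G)$. The only difference is the second witness. The paper pairs $(1,0,0,1,0,0)$ with $(0,0,0,0,0,1)$, whose centralizer has order $p^4$ and which fails \eqref{commuting2}; your $(0,0,0,1,0,1)$ has a centralizer of order $p^3$, and your centralizer and intersection computations are correct.
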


\begin{proof}
Let $x=(0,0,0,0,0,1)$ and $y=(1,0,0,1,0,0)$. 
A direct computation shows that they do not commute. Moreover,
$
C_G(x)=\{(A,0,C,0,E,F):A,C,E,F\in\mathbb{F}_p\}
$ and $C_G(y)=\{(A,B,C,A,0,0):A,B,C\in\mathbb{F}_p\}$. So,
$C_G(x)\cap C_G(y)=Z(G)$ in $\Gamma(G).$
This implies that the only common neighbors of $x$ and $y$ in $\Gamma(G)$ are the central elements. Hence the distance between $x$ and $y$ in $\Gamma_{red}(G)$ is at least $3$. Combining with Proposition \ref{diameter} we obtain 
$diam(\Gamma_{\text{red}}(G))=3$
\end{proof}
While the following determination of the clique number is likely known, in view of its connection with maximal abelian subgroups, we record it here for completeness and to make the paper self-LevchukSuleimanova2012. Related background on maximal abelian normal subgroups of maximal unipotent subgroups may be found in \cite{LevchukSuleimanova2012}, which in particular applies to the type $A_n$ case containing $UT(4,p)$.
\begin{lemma}{\label{p4}}
Every Abelian subgroup $S \leq G$ satisfies $|S| \leq p^4$.
\end{lemma}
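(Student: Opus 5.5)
The plan is to use the centralizer orders recorded in Table~\ref{tab:centralizer_conditions}. The key observation is that if $S\le G$ is abelian, then $S\subseteq C_G(x)$ for every $x\in S$. So whenever $S$ contains a non-central element $x$, we get $|S|\le |C_G(x)|$. If $S\subseteq Z(G)$, then $|S|\le p$ and there is nothing to prove. Thus we may assume $S$ contains non-central elements. By the table, their centralizers have order $p^3$, $p^4$ or $p^5$. If some non-central $x\in S$ has $|C_G(x)|\le p^4$, then $|S|\le p^4$ immediately.

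It remains to treat the case where every non-central element of $S$ lies in the set $\{(0,b,c,0,e,0): (b,e)\neq(0,0)\}$. These are exactly the elements with centralizer of order $p^5$. In that case $S$ is contained in
\[
K=\{(0,b,c,0,e,0): b,c,e\in\mathbb F_p\},
\]
which has order $p^3$. Hence $|S|\le p^3\le p^4$.

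Put differently, the argument splits according to whether $S\subseteq K$. If $S\subseteq K$, then $|S|\le|K|=p^3$. Otherwise $S$ contains an element $x$ with $d\neq0$ or $(a,f)\neq(0,0)$, and Table~\ref{tab:centralizer_conditions} gives $|C_G(x)|\le p^4$, so $|S|\le p^4$.

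The main obstacle is confirming that the centralizer orders in Table~\ref{tab:centralizer_conditions} are correct, particularly the claim that $|C_G(x)|\le p^4$ whenever $d\neq 0$ or $(a,f)\neq(0,0)$. If I verify this directly, I would count solutions $(A,B,C,D,E,F)$ of \eqref{commuting1}--\eqref{commuting3}.

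When $d\neq0$, equations \eqref{commuting1} and \eqref{commuting2} force $A=aD/d$ and $F=fD/d$, which are two independent linear constraints. This gives at most $p^4$ solutions.

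When $d=0$ and $(a,f)\neq(0,0)$, equations \eqref{commuting1} and \eqref{commuting2} force $D=0$. Equation \eqref{commuting3} reads $aE+bF-Ae-Bf=0$, and it is a nontrivial linear condition because the coefficient of $E$ or of $B$ is nonzero. Again this leaves at most $p^4$ solutions, completing the bound.
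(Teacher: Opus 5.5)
Your argument is correct, but it takes a different route from the paper. The paper uses the homomorphism $\gamma(x)=(a,d,f)$ into $\mathbb F_p^3$. It asserts that \eqref{commuting1} and \eqref{commuting2} force all of $\gamma(S)$ into a single line, and concludes $|S|\le|\ker\gamma|\cdot p=p^4$. You instead use $S\subseteq C_G(x)$ for every $x\in S$ and split on whether $S\subseteq K=\{(0,b,c,0,e,0)\}$. This reduces everything to the bound $|C_G(x)|\le p^4$ for $x\notin K$, which you verify directly from the linear system; your computations in both cases, $d\neq0$ and $d=0$ with $(a,f)\neq(0,0)$, are right.

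Your route is the more robust one. The paper's line claim is only justified when $S$ contains an element with $d\neq0$. For example, $(1,0,0,0,0,0)$ and $(0,0,0,0,0,1)$ commute but have independent $\gamma$-images, and $\{(a,0,c,0,0,f)\}$ is an abelian subgroup with $|\gamma(S)|=p^2$. The projection argument can be repaired by adding the case where $\gamma(S)$ is $2$-dimensional. Then every element of $S$ has $d=0$ and $\pi(S)=\mathbb F_p^2$, so $S\cap\ker\gamma\subseteq Z(G)$ by the same computation as in Lemma~\ref{lemma2}, giving $|S|\le p^3$.

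Once repaired, the projection approach gives a structural picture of $S$ that parallels the $\pi$-decomposition used later for maximal abelian subgroups. Your centralizer argument gives only the bound, but it does so uniformly, with less machinery and no missing case.
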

\begin{proof}
Define a  group homomorphism
\[
\gamma : G \to \mathbb{F}_p^{3}, \text{ such that } \gamma(x) = (a,d,f)
\].
Let $S \leq G$ be abelian. Take $x,y \in S$ with
\[
\gamma(x) = (a,d,f) \text{ and }\gamma(y) = (A,D,F).
\]
Since $xy = yx$, Equation \eqref{commuting1} and \eqref{commuting2} imply that $(A,D,F)$ is a scalar multiple of $(a,d,f)$. Equivalently, all vectors $\gamma(h)$ with $h \in S$ lie in a single $1$-dimensional subspace of $\mathbb{F}_p^{3}$. Hence
\(
|\gamma(S)| \leq p .
\) Also,
\(
\ker(\gamma)=\{(0,b,c,0,e,0)\}
\)
has size $p^{3}$. Therefore
\begin{align*}
|S| &= |S \cap \ker(\gamma)| \cdot |\gamma(S)| \\
    &\leq |\ker(\gamma)| \cdot p \\
    &= p^{3} \cdot p = p^{4}\\
\end{align*} Thus $|S| \leq p^{4}$.
\end{proof}
\begin{proposition}
 $\omega(\Gamma(G))=p^{4}$.
\end{proposition}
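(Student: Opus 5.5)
The proof has two halves: an upper bound from Lemma \ref{p4} and an explicit clique attaining it.

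For the upper bound, let $S$ be any clique of $\Gamma(G)$. Because $Z(G)$ commutes with everything, $S\cup Z(G)$ is again a clique, so we may assume $S$ is maximal. By Lemma \ref{clique and subgroup}, a maximal clique $S$ is an abelian subgroup of $G$. Lemma \ref{p4} then gives $|S|\le p^4$. Hence $\omega(\Gamma(G))\le p^4$. Alternatively, one can avoid Lemma \ref{clique and subgroup}: a clique generates an abelian subgroup, since pairwise commuting elements generate an abelian group, and that subgroup contains the clique, so Lemma \ref{p4} applies directly.

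For the lower bound, I will exhibit an abelian subgroup of order exactly $p^4$. The natural candidate is the subgroup
\[
H=\{(0,b,c,d,e,0) : b,c,d,e\in\mathbb{F}_p\}
\]
from Proposition \ref{connected}. It is clearly closed under the multiplication rule: for two elements of $H$, the product formula gives $(0,\,b+B,\,c+C,\,d+D,\,e+E,\,0)$, because all cross terms involve $a$ or $F$, which are zero. The commuting equations \eqref{commuting1}--\eqref{commuting3} all hold trivially, since every term contains $a,A,f$ or $F$. So $H$ is an abelian subgroup with $|H|=p^4$. Its elements therefore form a clique of size $p^4$, and $\omega(\Gamma(G))\ge p^4$.

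Combining the two bounds gives $\omega(\Gamma(G))=p^4$. There is no real obstacle here, since Lemma \ref{p4} does the heavy lifting. The only point needing care is justifying that an arbitrary clique lies in an abelian subgroup, which is the generation remark above.
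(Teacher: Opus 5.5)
Your proposal is correct and is essentially the paper's proof: the abelian subgroup $H=\{(0,b,c,d,e,0)\}$ gives a clique of size $p^4$, and Lemma \ref{p4} combined with Lemma \ref{clique and subgroup} (or, more directly, your remark that a clique generates an abelian subgroup containing it) gives $\omega(\Gamma(G))\le p^4$. One small slip: adjoining $Z(G)$ is not what lets you assume $S$ is maximal---that holds because $G$ is finite, so every clique extends to a maximal one.
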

\begin{proof}
There exists an abelian subgroup
\(
H=\{(0,b,c,d,e,0): b,c,d,e\in \mathbb{F}_p\}
\)
having order $p^{4}$. From Lemma \ref{p4}, there exist no abelian subgroup of order greater than $p^4$. So together with Lemma \ref{clique and subgroup}, shows maximum clique has size $p^{4}$.
\end{proof}
\section{Maximal Cliques}
In this section, Using the relation between cliques and abelian subgroups, we show that every maximal clique has order either $p^3$ or $p^4$. We first prove the following lemma.
\begin{lemma}{\label{lemmac}}
    Every maximal abelian subgroup of $UT(4,p)$ has size either $p^4$ or $p^3$
\end{lemma}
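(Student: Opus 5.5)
Let $S$ be a maximal abelian subgroup of $G$. By Lemma \ref{p4} we have $|S|\le p^4$, so it suffices to rule out $|S|\le p^2$. The plan is to use centralizers: since $S$ is abelian, $S\subseteq C_G(x)$ for every $x\in S$. Since $S$ is maximal abelian, $S=C_G(S)=\bigcap_{x\in S}C_G(x)$, and in particular $Z(G)\le S$. As $|Z(G)|=p$, a maximal abelian subgroup of order $p$ would be $Z(G)$ itself. But $Z(G)$ is properly contained in the abelian subgroup $H$, so this is impossible and $|S|\ge p^2$.

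The main step is to show $|S|\ge p^3$. Pick a non-central $x\in S$. Then $\langle Z(G),x\rangle$ is abelian of order $p^2$ and is contained in $S$. I then split according to Table~\ref{tab:centralizer_conditions}.

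\emph{Case $|C_G(x)|=p^3$} ($d\ne0$, $(a,f)\neq(0,0)$). Here $C_G(x)$ contains $x$ and $Z(G)$ and has order $p^3$. I would check directly from equations \eqref{commuting1}--\eqref{commuting3} that $C_G(x)$ is abelian. Solving gives $(A,D,F)=\lambda(a,d,f)$ together with one linear condition on $(B,E)$, and two such elements commute automatically because all their $(a,d,f)$-parts are proportional and the third equation becomes symmetric. Then $S\subseteq C_G(x)$ and maximality force $S=C_G(x)$, so $|S|=p^3$.

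\emph{Other non-central $x$.} If every non-central element of $S$ has centralizer of order $p^4$ or $p^5$, I argue that $\langle Z(G),x\rangle$ is never maximal. For that I exhibit an element $y\notin\langle Z(G),x\rangle$ commuting with $x$ such that $\langle Z(G),x,y\rangle$ is abelian of order $p^3$; any $y\in C_G(x)\setminus\langle Z(G),x\rangle$ works, since $x$ and $y$ commute. Such a $y$ exists because $|C_G(x)|\ge p^3>p^2$. Hence every abelian subgroup of order $p^2$ is non-maximal, and so $|S|\ge p^3$.

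Actually the second case is uniform: for every non-central $x$ we have $|C_G(x)|\ge p^3>p^2=|\langle Z(G),x\rangle|$, so any abelian subgroup of order $p^2$ containing $Z(G)$ extends. I would therefore present the argument as the single inequality $|C_G(x)|\ge p^3$ applied to $S\ni x$. If $|S|=p^2$, then $S=\langle Z(G),x\rangle$, and for $y\in C_G(x)\setminus S$ the group $\langle S,y\rangle$ is abelian, because it is generated by the central subgroup $Z(G)$ and the commuting elements $x,y$. This contradicts maximality.

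The only real obstacle is confirming from the table that every non-central element has centralizer order at least $p^3$, which follows from Table~\ref{tab:centralizer_conditions}. Combining $p^3\le|S|\le p^4$ with the fact that $|S|$ is a power of $p$ gives $|S|\in\{p^3,p^4\}$. I would then note that both sizes occur: $H$ has order $p^4$, and $C_G((1,0,0,1,0,0))$ has order $p^3$ and is abelian by the computation above.
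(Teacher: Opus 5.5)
Your proof is correct, but it takes a different route from the paper's.

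\textbf{Your route.} You get the lower bound abstractly. A maximal abelian $S$ is self-centralizing and contains $Z(G)$. So if $|S|\le p^2$, then $S=\langle Z(G),x\rangle$ for some noncentral $x$, which forces $S=C_G(x)$. This contradicts $|C_G(x)|\ge p^3$ from Table~\ref{tab:centralizer_conditions}. The upper bound you import directly from Lemma~\ref{p4}.

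\textbf{The paper's route.} The paper never uses the centralizer table here. It projects via $\pi(a,b,c,d,e,f)=(a,f)$, whose kernel is $H$, and computes $|A\cap H|$ exactly in each case:
\begin{itemize}
\item if $A\le H$, then $A=H$;
\item if $|\pi(A)|=p$, maximality makes $A\cap H$ equal to $H\cap C_G(x)$, of order $p^2$ (Lemmas~\ref{lemma0} and~\ref{lemma1});
\item if $|\pi(A)|=p^2$, then $A\cap H=Z(G)$ (Lemma~\ref{lemma2}).
\end{itemize}

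\textbf{What each buys.} Your argument is shorter and needs only the centralizer table. The paper's argument gives two things yours does not. First, it is finer: it shows that $H$ is the unique maximal abelian subgroup of order $p^4$ and that every other one has order exactly $p^3$. Second, it is self-contained. This matters because the paper's proof of Lemma~\ref{p4} is actually wrong: $(1,0,0,0,0,0)$ and $(0,0,0,0,0,1)$ commute, yet their $\gamma$-images $(1,0,0)$ and $(0,0,1)$ are not proportional. The statement of Lemma~\ref{p4} is still true, and the $\pi$-computation proves it. The inequality parts of Lemmas~\ref{lemma0} and~\ref{lemma2} do not use maximality, so they give $|A|\le p^3$ for every abelian $A\not\le H$. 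You should justify your upper bound by that argument rather than by the $\gamma$-argument.

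\textbf{A minor slip.} The subgroup $\langle Z(G),x\rangle$ need not have order $p^2$. For $p=2$, the element $x=(1,0,0,1,0,0)$ has order $4$ and $x^2=(0,1,0,0,0,0)$ is noncentral, so $|\langle Z(G),x\rangle|=8$. This is harmless, because your final argument only uses $S=\langle Z(G),x\rangle$ under the hypothesis $|S|=p^2$, where it holds. Once you give that uniform argument, the preliminary case split on $|C_G(x)|=p^3$ is unnecessary.
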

To prove this lemma, we first introduce the projection $\pi$ and establish the following three lemmas.     Let \(\pi : G \to \mathbb{F}_p^{\,2}\) be defined by
\(
\pi(a,b,c,d,e,f)=(a,f).
\) Then, $
\ker(\pi)=H.
$
Let \(A\le G\) be maximal abelian with \(A\not\subseteq H\). Then \(\pi(A)\neq \{(0,0)\}\), so
\(
|\pi(A)|\in\{p,p^2\}.
\)
Also,
\begin{equation}{\label{eqnc}}
  |A|=|A\cap H|\;|\pi(A)|.  
\end{equation}
\begin{lemma}{\label{lemma0}}
For any $x \in G\setminus H$, the set $H \cap C_G(x)$ has order $p^2$.
\end{lemma}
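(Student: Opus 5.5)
We work directly from the commuting equations \eqref{commuting1}--\eqref{commuting3}, specialised to the case where one of the two elements lies in \(H\). Write \(x=(a,b,c,d,e,f)\in G\setminus H\), so \((a,f)\neq(0,0)\), and take a general \(h=(0,B,C,D,E,0)\in H\). Putting \(A=F=0\) in the three equations, \(h\) commutes with \(x\) if and only if
\[
aD=0,\qquad Df=0,\qquad aE=Bf .
\]
Thus \(H\cap C_G(x)\) is the solution set of a homogeneous linear system in the four free coordinates \((B,C,D,E)\in\mathbb{F}_p^4\). Its size is \(p^{4-r}\), where \(r\) is the rank of the system. So it suffices to show that \(r=2\).

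To compute the rank, we split according to which of \(a,f\) is nonzero, as in the proof of Proposition~\ref{connected}.
\begin{itemize}
\item If \(a\neq 0\), the first equation forces \(D=0\), and the second equation is then automatic. The third equation reads \(E=\frac{f}{a}B\), which is a genuinely nontrivial condition because the coefficient of \(E\) is \(a\neq0\). Hence \(B\) and \(C\) are free, \(D=0\), and \(E\) is determined by \(B\). This gives exactly \(p^2\) solutions.
\item If \(a=0\), then \(f\neq0\). The second equation gives \(D=0\), and the third becomes \(Bf=0\), so \(B=0\). Now \(C\) and \(E\) are free, again giving \(p^2\) solutions.
\end{itemize}
In both cases the constraints \(D=0\) and the (nonzero) third equation are linearly independent, since they involve disjoint coordinates. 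Hence \(r=2\) and \(|H\cap C_G(x)|=p^2\).

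There is no real obstacle here. The only point needing care is to check in each case that the third equation is not vacuous, i.e.\ that its coefficient vector \((-f,0,0,a)\) in the coordinates \((B,C,D,E)\) is nonzero. This holds precisely because \((a,f)\neq(0,0)\). As a consistency check, the answer also agrees with \(H\cap C_G(x)\) being a subgroup of \(H\cong\mathbb{F}_p^4\) cut out by two independent linear functionals.
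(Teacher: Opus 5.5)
Your proof is correct and follows essentially the same route as the paper: setting $A=F=0$ in the commuting equations reduces membership in $H\cap C_G(x)$ to $D=0$ and $aE=Bf$, which gives $p$ choices for $(B,E)$ and $p$ free choices for $C$. Your case split on whether $a=0$, together with the rank count, just makes explicit the paper's observation that $aE=Bf$ is a single nontrivial linear relation because $(a,f)\neq(0,0)$.
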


\begin{proof}
Let \(x=(a,b,c,d,e,f)\in G\setminus H\), so \((a,f)\neq(0,0)\), and let \(h=(0,B,C,D,E,0)\in H\). The commutativity conditions reduce to
\(
D=0,~ aE=Bf.
\)
Thus \((B,E)\) satisfies one linear relation, so it has exactly \(p\) solutions in \(\mathbb F_p^2\), while \(C\) is arbitrary with \(p\) choices. Therefore
\(
|H\cap C_G(x)|=p^2.
\)
\end{proof}
\begin{lemma}{\label{lemma1}}
If $|\pi(A)|=p$, then $|A\cap H|=p^2$.
\end{lemma}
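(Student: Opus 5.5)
The plan is to prove $|A\cap H|\le p^2$ and $|A\cap H|\ge p^2$ separately. The upper bound uses Lemma~\ref{lemma0}; the lower bound uses the maximality of $A$. Throughout, $A$ is maximal abelian with $A\not\subseteq H$ and $|\pi(A)|=p$.

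\emph{Upper bound.} Since $A\not\subseteq H$, pick $x\in A\setminus H$. Because $A$ is abelian, every element of $A\cap H$ commutes with $x$, so $A\cap H\subseteq H\cap C_G(x)$. By Lemma~\ref{lemma0} this gives $|A\cap H|\le p^2$.

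\emph{Lower bound.} Since $|\pi(A)|=p$, the image $\pi(A)$ is the line spanned by some nonzero $(a_0,f_0)\in\mathbb F_p^2$. Every $x\in A\setminus H$ has $\pi(x)=\lambda(a_0,f_0)$ with $\lambda\neq0$. By the computation in the proof of Lemma~\ref{lemma0}, an element $h=(0,B,C,D,E,0)\in H$ commutes with such an $x$ if and only if $D=0$ and $\lambda a_0E=\lambda Bf_0$, that is, $a_0E=Bf_0$. So the set
\[
K=\{(0,B,C,0,E,0)\;:\; a_0E=Bf_0\}
\]
is independent of the choice of $x\in A\setminus H$. It is a subgroup of $H$ of order $p^2$, and it commutes with every element of $A\setminus H$. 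It also commutes with every element of $A\cap H$, because $H$ is abelian. Hence $K$ centralizes $A$.

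It follows that $AK=KA$ is a subgroup of $G$. It is abelian, since $A$ and $K$ are each abelian and commute elementwise. By the maximality of $A$ we get $AK=A$, so $K\subseteq A\cap H$ and $|A\cap H|\ge p^2$. Combined with the upper bound, $|A\cap H|=p^2$; in fact $A\cap H=K$.

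The main point needing care is the claim that the commuting condition with $H$ depends only on the line $\pi(A)$ and not on the particular element $x$. This holds because the condition $D=0$, $aE=Bf$ is homogeneous in $(a,f)$, so rescaling $(a,f)$ by $\lambda\neq0$ does not change it. Once that is established, the maximality argument is routine.
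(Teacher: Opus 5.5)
Your proof is correct and is essentially the paper's argument: the upper bound comes from $A\cap H\subseteq H\cap C_G(x)$ together with Lemma~\ref{lemma0}, and the lower bound comes from the maximality of $A$ once elements of $H\cap C_G(x)$ are adjoined. The only difference is how you check that these elements centralize all of $A$: you note that the condition $D=0$, $aE=Bf$ depends only on the line $\pi(A)$ (as in Theorem~\ref{S1}), which lets you adjoin all of $K$ at once and also identify $A\cap H=K$, whereas the paper writes each $y\in A$ as $x^t k$ with $k\in A\cap H$ and argues by contradiction with a single $h$.
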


\begin{proof}
Pick $x\in A\setminus H$. Since $|\pi(A)|=p$, the set $\pi(A)$ is a one-dimensional subspace of $\mathbb{F}_p^2$ generated by $\pi(x)$. So for every element $y\in A$, there exists $t\in\mathbb{F}_p$ such that
\(
\pi(y)=t\,\pi(x).
\)
Hence
\(
\pi(yx^{-t})=\pi(y)-t\,\pi(x)
=0.
\)
Thus
$yx^{-t}\in \ker\pi = H$ .
Also $yx^{-t}\in A$, hence
$yx^{-t}\in A\cap H$. Hence every $y\in A$ can be written as
\begin{equation}\label{eq:decomp}
y = x^t k ,~ k\in A\cap H .
\end{equation}
Now,
\begin{equation}\label{eqna}
A\cap H \subseteq H\cap C_G(x)
\end{equation}
since $A$ is abelian. By Lemma \ref{lemma0},
$|A\cap H|\le p^2$ .
Assume for contradiction that
\begin{equation}{\label{eqnb}}
|A\cap H|<p^2 .
\end{equation}
By \eqref{eqna}, \eqref{eqnb} and lemma \ref{lemma0}, there exists
\begin{equation}\label{eq:hchoice}
h\in (H\cap C_G(x))\setminus (A\cap H).
\end{equation}
Since $h\in C_G(x)$ by construction, we have $hx = xh$. Hence $h$ commutes with $x^t$ for all $t$.
Now $h\in H$ and $H$ is abelian, so $h$ commutes with every element of $A\cap H$ and together with the decomposition \eqref{eq:decomp} shows $h$ commutes with every $y\in A$.
Therefore $\langle A,h\rangle$ is an abelian subgroup strictly containing $A$ (because $h\notin A$) by \eqref{eq:hchoice}.
This contradicts the assumption that $A$ is maximal abelian. Hence our assumption \eqref{eqnb} was false. Therefore
\(
|A\cap H|=p^2 .
\)

\end{proof}
\begin{lemma}{\label{lemma2}}
If $|\pi(A)|=p^2$, then $|A\cap H|=p$.
\end{lemma}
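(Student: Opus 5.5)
Throughout, $A$ is a maximal abelian subgroup with $A\not\subseteq H$, as in the setup preceding Lemma \ref{lemma0}. The plan is to show $A\cap H=Z(G)$, which has order $p$. I will prove the two inclusions separately: the upper bound from the commuting equations, and the lower bound from maximality.

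\emph{Upper bound.} Since $|\pi(A)|=p^2$, we have $\pi(A)=\mathbb F_p^2$. Choose $x,y\in A$ with $\pi(x)=(a,f)$ and $\pi(y)=(a',f')$ linearly independent in $\mathbb F_p^2$, so that $af'-a'f\neq 0$. Every $h=(0,B,C,D,E,0)\in A\cap H$ commutes with both $x$ and $y$, because $A$ is abelian. By the reduction in the proof of Lemma \ref{lemma0}, commuting with $x$ gives $D=0$ and $aE=Bf$. Commuting with $y$ gives $a'E=Bf'$. Together these form the homogeneous system
\[
\begin{pmatrix} a & -f\\ a' & -f'\end{pmatrix}\begin{pmatrix}E\\ B\end{pmatrix}=0 ,
\]
whose determinant is $-(af'-a'f)\neq0$. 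Hence $B=E=0$, so $h=(0,0,C,0,0,0)\in Z(G)$. This shows $A\cap H\subseteq Z(G)$, and therefore $|A\cap H|\le p$.

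\emph{Lower bound.} Every central element commutes with all of $A$, so $\langle A,Z(G)\rangle=AZ(G)$ is abelian. By maximality of $A$ this forces $Z(G)\subseteq A$. Since also $Z(G)\subseteq H$, we get $Z(G)\subseteq A\cap H$. Combined with the upper bound, $A\cap H=Z(G)$ and $|A\cap H|=p$.

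The main (mild) obstacle is only the choice of $x,y$ with independent $\pi$-images, which follows immediately from $\pi(A)=\mathbb F_p^2$ because $\pi$ is a homomorphism onto the additive group $\mathbb F_p^2$. After that, the determinant computation and the maximality argument are routine. As a consequence, \eqref{eqnc} gives $|A|=p\cdot p^2=p^3$ in this case, which is what Lemma \ref{lemmac} needs.
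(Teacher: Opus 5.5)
Your proof is correct and follows essentially the same route as the paper. The paper takes preimages $x_1,x_2\in A$ of $(1,0)$ and $(0,1)$ to force $D=B=E=0$, which is the special case of your determinant argument, and then uses maximality to get $Z(G)\subseteq A\cap H$; your remark that $AZ(G)$ is abelian spells out that last step a little more explicitly.
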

\begin{proof}
If $|\pi(A)|=p^2$, then $\pi(A)=\mathbb{F}_p^2$. Hence $A$ contains elements $x_1$ and $x_2$ with
\(
\pi(x_1)=(1,0),~\pi(x_2)=(0,1).
\)
Let
\(
h=(0,B,C,D,E,0)\in A\cap H.
\) Since $A$ is abelian, $h$ must commute with both $x_1$ and $x_2$.
Commuting with $x_1$ forces $D=0$ and $E=0$, while commuting with $x_2$ forces $D=0$ and $B=0$. Hence
\(
h=(0,0,C,0,0,0)\in Z(G).
\)
Therefore
\(
A\cap H \subseteq Z(G).
\) Since $A$ is maximal abelian, it contains $Z(G)$. Hence 
\(
Z(G) \subseteq A \cap H
\)
always holds, we get
\(
|A \cap H| = |Z(G)| = p .
\)
\end{proof}
\begin{proof}[\textbf{Proof of Lemma \ref{lemmac}}]
Let \(A \leq G\) be a maximal abelian subgroup. If \(A \subseteq H\), then, since \(H\) is itself an abelian subgroup, the maximality of \(A\) implies that \(A=H\). Consequently, \(|A|=|H|=p^{4}\). On the other hand, if \(A \nsubseteq H\), then the following arguments show that \(|A|=p^{3}\). That is, if $|\pi(A)|=p$, then Equation \eqref{eqnc} and Lemma \ref{lemma1},
\(
|A|=p^2\cdot p=p^3.
\)
Similarly, If $|\pi(A)|=p^2$, then by Lemma \ref{lemma2}, 
\(
|A|=p\cdot p^2=p^3
\)
\end{proof}
\begin{theorem}
Every maximal clique of \(\Gamma(G)\) has order either \(p^{3}\) or \(p^{4}\).
\end{theorem}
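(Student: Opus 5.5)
The plan is to combine Lemma \ref{clique and subgroup} with Lemma \ref{lemmac}. By Lemma \ref{clique and subgroup}, a vertex subset $S$ of $G$ is a maximal clique of $\Gamma(G)$ exactly when $S$ is a maximal abelian subgroup of $G$. The order of a maximal clique is therefore the order of some maximal abelian subgroup. Lemma \ref{lemmac} then says this order is either $p^3$ or $p^4$.

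To check that Lemma \ref{clique and subgroup} applies, I would spell out the correspondence in both directions. Let $S$ be a maximal clique. The elements of $S$ pairwise commute, and every central element commutes with all of them, so maximality gives $Z(G)\subseteq S$. Moreover $\langle S\rangle$ is abelian, because it is generated by pairwise commuting elements. Since $\langle S\rangle$ is itself a clique containing $S$, maximality forces $S=\langle S\rangle$, so $S$ is an abelian subgroup.

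This subgroup is maximal among abelian subgroups. Indeed, any abelian subgroup containing $S$ is a clique containing $S$, hence equals $S$. Conversely, a maximal abelian subgroup is a clique, and it is a maximal clique because adjoining a commuting element $g$ would give the larger abelian subgroup $\langle A,g\rangle$. With the correspondence established, applying Lemma \ref{lemmac} to $S$ gives $|S|\in\{p^3,p^4\}$.

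I would also note that both sizes actually occur. The subgroup $H$ has order $p^4$. A maximal abelian subgroup containing $x=(1,0,0,0,0,0)$ lies outside $H$ and so has order $p^3$ by Lemma \ref{lemmac}; one explicit choice is $\{(A,B,C,0,0,0)\}$. The only step needing care is the clique-subgroup identification above, and that is exactly the content of the cited lemma, so I expect no real obstacle.
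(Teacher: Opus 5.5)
Your proposal is correct and matches the paper's argument, which obtains the theorem by combining Lemma \ref{clique and subgroup} with Lemma \ref{lemmac}. Your direct check of the maximal-clique/maximal-abelian-subgroup correspondence (including $Z(G)\subseteq S$ and $S=\langle S\rangle$), and your examples $H$ and $\{(A,B,C,0,0,0)\}$ showing that both orders occur, are valid additions the paper does not spell out.
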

\begin{proof}
    The result follows from Lemma \ref{clique and subgroup} and Lemma \ref{lemmac}.
\end{proof}
\section{Layer decomposition using cosets}
Consider the quotient \(G/H\), where
\(
H=\{(0,B,C,D,E,0):B,C,D,E\in \mathbb F_p\}.
\)
Let \(\mathcal C\) denote the collection of all nonzero cosets of \(H\) in \(G\). For each nonzero pair \((a,f)\in \mathbb F_p^2\), define
\(
C_{(a,f)}=(a,0,0,0,0,f)H.
\)
Thus
\(
\mathcal C=\{C_{(a,f)}:(a,f)\in \mathbb F_p^2\setminus\{(0,0)\}\}.
\)
Then the following theorem holds.
\begin{theorem}\leavevmode
\begin{enumerate}
\item Every nonzero coset \(C_{(a,f)}\) splits into \(p\) disjoint \(d\)-layers $L_d(a,f) = \{(a,b,c,d,e,f):b,c,e \in \mathbb{F}_p\}$.

\item Elements from different \(d\)-layers \(L_d(a,f)\) never commute.

\item Inside each layer \(L_d(a,f)\), commuting is determined by
\(
a(E-e)=f(B-b).
\)

\item \(|L_d(a,f)| = p^3.\)

\item Each element of \(L_d(a,f)\) commutes with exactly \(p^2\) elements of \(L_d(a,f)\).
\end{enumerate}

\end{theorem}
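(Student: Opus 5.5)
The plan is to work entirely with the coordinate description and the commuting equations \eqref{commuting1}--\eqref{commuting3}, specialised to two elements that lie in the \emph{same} nonzero coset. I read item~2 as referring to two layers of one coset \(C_{(a,f)}\); this is the reading under which the argument below works. The first step is to identify \(C_{(a,f)}\) concretely. Since \(\pi\) is a homomorphism with \(\ker(\pi)=H\), the coset \((a,0,0,0,0,f)H\) is exactly the fibre \(\pi^{-1}(a,f)\). I will also check this directly from the multiplication rule: \((a,0,0,0,0,f)(0,B,C,D,E,0)=(a,B+aD,C+aE,D,E,f)\), and as \((B,C,D,E)\) ranges over \(\mathbb F_p^4\) this gives every tuple whose first and last coordinates are \(a\) and \(f\). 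Hence \(C_{(a,f)}=\{(a,b,c,d,e,f): b,c,d,e\in\mathbb F_p\}\).

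Items~1 and~4 then follow by fixing the coordinate \(d\). For each fixed \(d\in\mathbb F_p\), the set \(L_d(a,f)\) consists of the elements of the coset with that value of \(d\). The \(p\) values of \(d\) give \(p\) pairwise disjoint sets whose union is \(C_{(a,f)}\). Each layer has \(b,c,e\) free, so \(|L_d(a,f)|=p^3\).

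For items~2 and~3, take \(x=(a,b,c,d,e,f)\) and \(y=(a,B,C,D,E,f)\) in the same coset. Equation \eqref{commuting1} becomes \(a(D-d)=0\) and equation \eqref{commuting2} becomes \(f(d-D)=0\). Since \((a,f)\neq(0,0)\), at least one of these forces \(D=d\). So two elements of the coset in different layers cannot commute, which is item~2. When \(D=d\), the first two equations hold automatically. Equation \eqref{commuting3}, \(aE+bf=ae+Bf\), rearranges to \(a(E-e)=f(B-b)\), which is item~3.

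For item~5, fix \(x\in L_d(a,f)\) and count the \(y\in L_d(a,f)\) commuting with it. The condition \(a(E-e)=f(B-b)\) is one linear equation in \((B,E)\), and it is nontrivial because \((a,f)\neq(0,0)\). So it has exactly \(p\) solutions in \(\mathbb F_p^2\), and \(C\) is free, giving \(p^2\) elements; this agrees with Lemma~\ref{lemma0}. This count includes \(x\) itself, so I will state explicitly that ``commutes with exactly \(p^2\) elements'' counts \(x\), meaning \(p^2-1\) neighbours inside the layer. No step is a genuine obstacle. The only points needing care are the correct identification of the coset through the multiplication rule (the \(b\) and \(c\) entries are shifted by \(aD\) and \(aE\)) and this self-counting convention.
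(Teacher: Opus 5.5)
Your proposal is correct and follows the paper's argument essentially step for step. You fix $d$ to get $p$ disjoint layers of size $p^3$, use \eqref{commuting1}--\eqref{commuting2} with $(a,f)\neq(0,0)$ to force $D=d$, and reduce \eqref{commuting3} to one nontrivial linear relation in $(B,E)$ with $C$ free. Your extra check that $C_{(a,f)}$ is the full fibre $\pi^{-1}(a,f)$, and your remark that the count $p^2$ includes $x$ itself (so $x$ has $p^2-1$ neighbours in its layer, consistent with $\Gamma[L_d(a,f)]\cong pK_{p^2}$), are useful clarifications that the paper leaves implicit.
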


\begin{proof}
Since \(d\) takes \(p\) values, the coset \(C_{(a,f)}\) decomposes into \(p\) disjoint \(d\)-layers. If \(x\in L_d(a,f)\) and \(y\in L_D(a,f)\) commute, then the first two commuting equations force
\(
a(D-d)=0~ \text{and} ~f(D-d)=0.
\)
As \((a,f)\neq(0,0)\), it follows that \(D=d\). Hence different \(d\)-layers are pairwise noncommuting. Inside a fixed layer, the remaining commuting condition reduces to \(a(E-e)=f(B-b)\), which proves (3). The size of \(L_d(a,f)\) is immediate from the free choice of \(b,c,e\). Finally, for fixed \(x\in L_d(a,f)\), this criterion is a single linear relation in \(B\) and \(E\), while \(C\) remains arbitrary, so \(x\) commutes with exactly \(p^2\) elements of \(L_d(a,f)\).
\end{proof}
\begin{theorem}
For every nonzero coset \(C_{(a,f)}\) and every \(d\in \Gamma_p\), the induced subgraph on the \(d\)-layer \(L_d(a,f)\) satisfies
\(
\Gamma[L_d(a,f)] \cong pK_{p^2}.
\)
\end{theorem}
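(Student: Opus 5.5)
We will show that, inside a fixed layer $L_d(a,f)$, commuting is an equivalence relation with $p$ classes, each of size $p^2$. Then the induced subgraph is a disjoint union of $p$ cliques of size $p^2$, with no edges between different cliques.

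Fix a nonzero pair $(a,f)$ and a value $d$. By part (3) of the preceding theorem, two elements $x=(a,b,c,d,e,f)$ and $y=(a,B,C,d,E,f)$ of $L_d(a,f)$ commute if and only if $a(E-e)=f(B-b)$. Define the linear functional
\[
\lambda:\mathbb F_p^2\to\mathbb F_p,\qquad \lambda(b,e)=ae-fb .
\]
The commuting condition then reads $\lambda(B,E)=\lambda(b,e)$. Since $(a,f)\neq(0,0)$, the map $\lambda$ is a nonzero functional, hence surjective.

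For each $t\in\mathbb F_p$, set
\[
K_t=\{(a,b,c,d,e,f)\in L_d(a,f): ae-fb=t\}.
\]
The sets $K_t$, for $t\in\mathbb F_p$, partition $L_d(a,f)$ into $p$ parts. The fibre $\lambda^{-1}(t)$ is an affine line containing $p$ pairs $(b,e)$, and $c$ is free, so $|K_t|=p\cdot p=p^2$. This matches parts (4) and (5) of the preceding theorem: $p\cdot p^2=p^3$, and each element commutes with exactly the $p^2$ elements of its own part (itself included).

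Two distinct elements of $L_d(a,f)$ are adjacent precisely when they lie in the same $K_t$, since adjacency is equality of $\lambda$-values. So each $K_t$ induces $K_{p^2}$, and there are no edges between $K_t$ and $K_{t'}$ for $t\neq t'$. Hence
\[
\Gamma[L_d(a,f)]\cong pK_{p^2}.
\]
The statement is about the induced subgraph, so non-central status is irrelevant. In any case, elements of $L_d(a,f)$ have $(a,f)\neq(0,0)$ and are therefore non-central.

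The only step needing care is recognising that the relation ``equal value of $ae-fb$'' is transitive. This is what turns the degree count of part (5) into a clique decomposition: a regular graph of degree $p^2-1$ on $p^3$ vertices need not be $pK_{p^2}$ without transitivity.
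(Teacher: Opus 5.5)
Your proof is correct and follows essentially the same route as the paper. The paper also defines the linear functional $\phi(b,e)=ae-fb$ and partitions $L_d(a,f)$ into its $p$ level sets of size $p^2$. It then observes that adjacency inside the layer is exactly equality of $\phi$-values, which yields $p$ pairwise nonadjacent cliques $K_{p^2}$.
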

\begin{proof}
Let $x,y \in L_d(a,f)$ such that
\(
x=(a,b,c,d,e,f)  \text{and}  y=(a,B,C,d,E,f).
\) Then $x \sim y$ if and only if
\(
a(E-e)=f(B-b),
\) which is equivalent to
\(
aE-fB = ae-fb.
\)
Define a linear map
\(
\phi:\mathbb{F}_p^2 \to \mathbb{F}_p
\) by
\(
\phi(b,e)=ae-fb.
\)
Then
\(
x \sim y \iff \phi(B,E)=\phi(b,e).
\)
For each $t\in \mathbb{F}_p$, define
\[
L_{d,t}(a,f)=\{(a,b,c,d,e,f)\in C_{(a,f)}:\phi(b,e)=t\}.
\]

Since $\phi$ is linear, the sets $\phi^{-1}(t)$ partition $\mathbb{F}_p^2$ into $p$ affine lines, each of size $p$. Hence, each set $L_{d,t}(a,f)$ has size $p^2$.
Moreover, if $x,y\in L_{d,t}(a,f)$ then $\phi(b,e)=\phi(B,E)$, so $x$ and $y$ commute. Thus each $L_{d,t}(a,f)$ induces a clique of size $p^2$. If $x\in L_{d,s}(a,f)$ and $y\in L_{d,t}(a,f)$ with $s\ne t$, then $\phi(b,e)\ne \phi(B,E)$, so $x$ and $y$ do not commute.
\end{proof}

Take two cosets $C_{(a,f)}$ and $C_{(A,F)}$. Let $x=(a,b,c,d,e,f)\in C_{(a,f)}$ and $y=(A,B,C,D,E,F)\in C_{(A,F)}$. From commuting equations \eqref{commuting1} and \eqref{commuting2}, if $d$ and $D$ are not both zero, then $(a,f)$ and $(A,F)$ are proportional. Therefore, if $(a,f) $ and $(A,F)$ are not proportional, then equations \eqref{commuting1} and \eqref{commuting2} force  $d=D=0$.
Hence, \text{when} $C_{(a,f)}$ and $C_{(A,F)}$ are linearly independent, all the edges between  $ C_{(a,f)}$ and  $C_{(A,F)}$ are between  $L_0(a,f)$ and  $L_0(A,F)$. This leads to the following theorem.
\begin{theorem}{\label{L0}}
    Let \((a,f)\) and \((A,F)\) be linearly independent in \(\mathbb{F}_p^2\). Then commuting between vertices of \(C_{(a,f)}\) and \(C_{(A,F)}\) occurs only between the layers \(L_0(a,f)\) and \(L_0(A,F)\).
\end{theorem}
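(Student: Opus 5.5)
The plan is to reduce the claim to the first two commuting equations, \eqref{commuting1} and \eqref{commuting2}, applied to a pair of elements lying in the two cosets. Take arbitrary $x=(a,b,c,d,e,f)\in C_{(a,f)}$ and $y=(A,B,C,D,E,F)\in C_{(A,F)}$ with $xy=yx$. The goal is to show $d=D=0$, which is exactly the statement that $x\in L_0(a,f)$ and $y\in L_0(A,F)$. The third equation \eqref{commuting3} will not be needed, since it only constrains $b,e,B,E$.

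The key step is a small linear-algebra argument. The equations $aD=Ad$ and $dF=Df$ can be rewritten as
\[
D\,(a,f)=d\,(A,F)
\]
in $\mathbb{F}_p^2$. Indeed, the first coordinate is $Da=dA$ and the second is $Df=dF$. Since $(a,f)$ and $(A,F)$ are linearly independent, the only solution of this vector equation is $D=d=0$. For completeness, this can also be checked directly: if $d\neq0$, then $(A,F)=(D/d)(a,f)$, contradicting independence; so $d=0$, and then $D(a,f)=0$ with $(a,f)\neq(0,0)$ forces $D=0$.

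Consequently every commuting pair between the two cosets lies in $L_0(a,f)\times L_0(A,F)$. Equivalently, any vertex in a layer $L_d(a,f)$ with $d\neq0$ has no neighbour in $C_{(A,F)}$, and symmetrically for $L_D(A,F)$ with $D\neq0$. This is precisely the assertion of Theorem \ref{L0}.

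There is no real obstacle here. The only point requiring care is to pair the two equations correctly into a single proportionality statement, so that linear independence can be invoked in one step rather than through a case analysis on which of $a,f,A,F$ vanish.
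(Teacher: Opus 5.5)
Your proof is correct and matches the paper's argument. The paper likewise uses only the first two commuting equations to show that a nonzero $d$ or $D$ would make $(a,f)$ and $(A,F)$ proportional, forcing $d=D=0$. Writing the two equations as the single vector identity $D(a,f)=d(A,F)$ is just a tidier phrasing of that same step.
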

\begin{theorem}
If \((A,F)=k(a,f)\) for some \(k\in\mathbb F_p^\times\), Then the bipartite graph induced by the commuting graph between \(C_{(a,f)}\) and \(C_{(A,F)}\) satisfies
\(
\Gamma[C_{(a,f)},C_{(A,F)}]\cong p^{2}K_{p^{2},p^{2}} .
\)
\end{theorem}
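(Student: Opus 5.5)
The plan is to write down the three commuting equations \eqref{commuting1}--\eqref{commuting3} for one element of each coset, and see that they split both cosets into matching blocks of size \(p^2\). I take \(k\neq 1\), so that \(C_{(a,f)}\) and \(C_{(A,F)}\) are distinct cosets and the bipartite graph makes sense.

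Fix \(x=(a,b,c,d,e,f)\in C_{(a,f)}\) and \(y=(ka,B,C,D,E,kf)\in C_{(A,F)}\), with \((a,f)\neq(0,0)\).

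\textbf{The layers.} Equation \eqref{commuting1} becomes \(aD=kad\), and equation \eqref{commuting2} becomes \(dkf=Df\). Since at least one of \(a,f\) is nonzero, both together are equivalent to
\[
D=kd .
\]
This is the analogue of part (2) of the layer theorem: the layer \(L_d(a,f)\) can only be joined to the layer \(L_{kd}(A,F)\). Because \(d\mapsto kd\) is a bijection of \(\mathbb F_p\), the \(p\) layers of one coset are matched with the \(p\) layers of the other.

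\textbf{The remaining equation.} Equation \eqref{commuting3} reads \(aE+b(kf)=(ka)e+Bf\). Rearranged, this is
\[
aE-fB=k(ae-fb).
\]
Let \(\phi(b,e)=ae-fb\) be the linear map from the proof of \(\Gamma[L_d(a,f)]\cong pK_{p^2}\). Then the condition says exactly that \(\phi(B,E)=k\,\phi(b,e)\). Note that \(y\) has first and last coordinates \((ka,kf)\), but I deliberately measure \(y\) with the same functional \(\phi\) built from \((a,f)\).

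\textbf{The blocks.} For each pair \((d,t)\) define
\[
X_{d,t}=\{x\in L_d(a,f):\phi(b,e)=t\},\qquad Y_{d',t'}=\{y\in L_{d'}(A,F):\phi(B,E)=t'\}.
\]
Each is an affine line in the \((b,e)\)-plane, times a free coordinate \(c\), so each has size \(p^2\). There are \(p^2\) blocks of each kind, and they partition \(C_{(a,f)}\) and \(C_{(A,F)}\) respectively.

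Combining the two steps, \(x\in X_{d,t}\) is adjacent to \(y\in Y_{d',t'}\) if and only if
\[
(d',t')=(kd,kt).
\]
The condition depends only on the blocks, not on the particular elements chosen. So every vertex of \(X_{d,t}\) is adjacent to every vertex of \(Y_{kd,kt}\), and to nothing else in \(C_{(A,F)}\). The map \((d,t)\mapsto(kd,kt)\) is a bijection of \(\mathbb F_p^2\) because \(k\neq0\). Hence the bipartite graph is the disjoint union of the \(p^2\) complete bipartite graphs on \(X_{d,t}\cup Y_{kd,kt}\), that is, \(p^2K_{p^2,p^2}\).

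\textbf{Main obstacle.} The step that needs care is the reduction of \eqref{commuting3} to \(\phi(B,E)=k\,\phi(b,e)\). The point is to evaluate \(y\) against the functional \(\phi\) defined by \((a,f)\), not by \((ka,kf)\). If one instead used the functional for \((ka,kf)\), the factor of \(k\) would be absorbed and the matching would look like \(t\mapsto t\) in place of \(t\mapsto kt\). Either convention gives a bijection between blocks, but the indexing must be kept consistent.

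I would also verify that the case split \(a\neq0\) or \(f\neq0\) in the first step really gives \(D=kd\) in both cases. It does: in each case we can cancel by the nonzero one of \(a,f\).
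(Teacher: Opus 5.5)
Your proposal is correct and follows the paper's proof essentially step for step: you reduce the first two commuting equations to \(D=kd\), rewrite the third as \(aE-fB=k(ae-fb)\), and match the blocks \(L_{d,t}(a,f)\) with \(L_{kd,kt}(ka,kf)\), measuring both with the functional from \((a,f)\), via the bijection \((d,t)\mapsto(kd,kt)\). Your remark that \(k\neq 1\) is needed for the two cosets to be distinct is a small point the paper leaves implicit.
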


\begin{proof}
Let \(x=(a,b,c,d,e,f)\in C_{(a,f)}\) and \(y=(ka,B,C,D,E,kf)\in C_{(ka,kf)}\). Then substituting  \(A=ka\) and \(F=kf\)  in \eqref{commuting1}, \eqref{commuting2} and \eqref{commuting3}, we obtain
\(
aD=kad, ~dkf=Df, ~aE+kbf=kae+Bf.
\)
Since \((a,f)\neq(0,0)\), the first two equations imply
\(
D=kd.
\)
Hence \(x\) and \(y\) commute if and only if
\(
D=kd\quad\text{and}\quad aE-fB=k(ae-fb).
\)
For \(d,t\in\mathbb F_p\), define
\[
L_{d,t}(a,f)=\{(a,b,c,d,e,f)\in C_{(a,f)}:ae-fb=t\},
\]
and
\[
L_{kd,kt}(ka,kf)=\{(ka,B,C,kd,E,kf)\in C_{(ka,kf)}:aE-fB=kt\}.
\]
Each set \(L_{d,t}(a,f)\) (and similarly \(L_{kd,kt}(ka,kf)\)) has size \(p^{2}\), since the equation \(ae-fb=t\) is one linear relation in \(b,e\) and the coordinate \(c\) is free.
Let \(x\in L_{d,t}(a,f)\) and \(y\in L_{d',t'}(ka,kf)\).  
From the commuting conditions derived above, \(x\) and \(y\) commute precisely when
\(
d'=kd\quad\text{and}\quad t'=kt.
\)
Thus \(L_{d,t}(a,f)\) is completely joined to \(L_{kd,kt}(ka,kf)\), and there are no edges to any other subsets.
Consequently, each pair
\(
L_{d,t}(a,f),~ L_{kd,kt}(ka,kf)
\)
forms a complete bipartite graph \(K_{p^{2},p^{2}}\). Since \((d,t)\) ranges over \(\mathbb F_p^2\), there are \(p^{2}\) such pairs, and they are mutually disjoint. Therefore the bipartite graph between \(C_{(a,f)}\) and \(C_{(A,F)}\) is the disjoint union of \(p^{2}\) copies of \(K_{p^{2},p^{2}}\), that is,
\(
\Gamma[C_{(a,f)},C_{(A,F)}]\cong p^{2}K_{p^{2},p^{2}}.
\)
\end{proof}
\begin{definition}
Two cosets $C_{(a,f)}$ and $C_{(A,F)}$ are said to have the same direction if
\(
(A,F)=\lambda(a,f)
\) for some $\lambda \in \mathbb{F}_p^{*}$.
\end{definition}

Thus the directions correspond to the one-dimensional subspaces of the vector space $\mathbb{F}_p^{2}$. For each nonzero direction $\ell=\langle (a,f)\rangle$, define the direction part

\[
U_\ell=\bigcup_{\lambda\in\mathbb{F}_p^{*}} C_{(\lambda a,\lambda f)} .
\]
Thus $U_\ell$ consists of all cosets whose parameters are scalar multiples of $(a,f)$.

Since every nonzero vector of $\mathbb{F}_p^2$ lies in exactly one one-dimensional subspace, the family \(
\{U_\ell:\ell\subset \mathbb{F}_p^2,\ \dim(\ell)=1\}
\) forms a partition of the set $G\setminus H$. Moreover, since the number of one-dimensional subspaces of $\mathbb{F}_p^2$ is $p+1$, there are exactly $p+1$ direction parts. Each direction part $U_\ell$ consists of $p-1$ cosets of $H$, namely
\(
C_{(a,f)},\ C_{(2a,2f)},\ \ldots,\ C_{((p-1)a,(p-1)f)}.
\)
Also define
\[
G_\ell(d,t)=\bigcup_{\lambda\in\mathbb F_p^\times} L_{\lambda d,\lambda t}(\lambda a,\lambda f).
\]
\begin{theorem}{\label{giant_clique}}
For any nonzero direction $\ell=\langle (a,f)\rangle,$ The induced subgraph $\Gamma[U_\ell]$ satisfies, 
\(
\Gamma[U_\ell]\cong p^2K_{(p-1)p^2}.
\)
\end{theorem}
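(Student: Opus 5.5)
We show that the sets \(G_\ell(d,t)\), for \((d,t)\in\mathbb F_p^2\), are exactly the connected components of \(\Gamma[U_\ell]\), and that each of them is a clique of size \((p-1)p^2\).

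\textbf{Step 1: a single coset-invariant label.} We fix the representative \((a,f)\) of \(\ell\). Every \(x\in U_\ell\) has the form \(x=(\lambda a,b,c,d',e,\lambda f)\) for a unique \(\lambda\in\mathbb F_p^\times\). We assign to \(x\) the label
\[
\sigma(x)=\bigl(\lambda^{-1}d',\ \lambda^{-1}(ae-fb)\bigr)\in\mathbb F_p^2 .
\]
By construction, \(x\in G_\ell(d,t)\) if and only if \(\sigma(x)=(d,t)\). This uses the normalization of the \(t\)-coordinate in the previous theorem, where the condition on \(C_{(ka,kf)}\) is written as \(aE-fB=kt\), i.e. the linear form in \(b,e\) is taken with respect to the fixed representative \((a,f)\). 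Hence the \(p^2\) sets \(G_\ell(d,t)\) partition \(U_\ell\).

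Each \(G_\ell(d,t)\) has size \((p-1)p^2\), since it is a disjoint union over \(\lambda\in\mathbb F_p^\times\) of \(p-1\) sets of size \(p^2\). The size \(p^2\) is counted as in the previous theorem: one linear relation in \((b,e)\) with \((a,f)\neq(0,0)\), and \(c\) free.

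\textbf{Step 2: adjacency.} Take \(x\) with coefficient \(\lambda\) and \(y\) with coefficient \(\mu\), and show that \(x\sim y\) if and only if \(\sigma(x)=\sigma(y)\). There are two cases.

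\begin{itemize}
\item If \(\lambda=\mu\), the two elements lie in the same coset \(C_{(\lambda a,\lambda f)}\). By the layer theorem and the result \(\Gamma[L_d(a,f)]\cong pK_{p^2}\), applied with \((\lambda a,\lambda f)\) in place of \((a,f)\), they commute if and only if their \(d\)-coordinates agree and \(\lambda(ae-fb)=\lambda(aE-fB)\). This is equivalent to \(\sigma(x)=\sigma(y)\).
\item If \(\lambda\neq\mu\), we apply the preceding theorem to the pair of cosets \(C_{(\lambda a,\lambda f)}\) and \(C_{(\mu a,\mu f)}\), with \(k=\mu\lambda^{-1}\). It gives that \(x\) and \(y\) commute if and only if \(D=kd'\) and \(\lambda(aE-fB)=k\,\lambda(ae-fb)\), after rescaling the base representative. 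The cleanest way is to redo the substitution into \eqref{commuting1}--\eqref{commuting3} directly. Equations \eqref{commuting1} and \eqref{commuting2} give \(\lambda D=\mu d'\). Equation \eqref{commuting3} gives \(\lambda(aE-fB)=\mu(ae-fb)\). Dividing both by \(\lambda\mu\) yields exactly \(\sigma(x)=\sigma(y)\).
\end{itemize}

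\textbf{Conclusion and main obstacle.} Adjacency within \(U_\ell\) is therefore the equivalence relation "same \(\sigma\)-label". So \(\Gamma[U_\ell]\) is a disjoint union of cliques, one for each label \((d,t)\in\mathbb F_p^2\), each of size \((p-1)p^2\). This gives \(\Gamma[U_\ell]\cong p^2K_{(p-1)p^2}\).

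The main obstacle is the bookkeeping of the scalings in the second case. The proof must check that the indexing \(L_{\lambda d,\lambda t}(\lambda a,\lambda f)\) used in defining \(G_\ell(d,t)\) matches the invariant \(\sigma\). In particular, "\(t\)" must be read relative to the base representative \((a,f)\), which is consistent with the previous theorem. Transitivity is then automatic, because adjacency is characterized by equality of a function.
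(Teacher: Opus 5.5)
Your proposal is correct and is essentially the paper's proof: your label $\sigma(x)=(\lambda^{-1}d',\lambda^{-1}(ae-fb))$ is exactly the paper's assignment of $x$ to $G_\ell(d,t)$. Your direct substitution giving $\lambda D=\mu d'$ and $\lambda(aE-fB)=\mu(ae-fb)$ is the same computation the paper uses, except that the paper splits it into ``each $G_\ell(d,t)$ is a clique'' and ``distinct $G_\ell(d,t)$ are nonadjacent'' while you state a single biconditional (your separate $\lambda=\mu$ case is redundant, and you are right that $t$ must be read relative to the fixed representative $(a,f)$, as the paper implicitly does).
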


\begin{proof}\leavevmode
We first show that the sets \(G_\ell(d,t)\), where \(d,t\in\mathbb F_p\), form a partition of \(U_\ell\).
Let
\(
x=(\lambda a,b,c,D,e,\lambda f)\in U_\ell,
\lambda\in\mathbb F_p^\times.
\)
Since \(\lambda\neq 0\), there exists a unique \(d\in\mathbb F_p\) such that \(D=\lambda d\). Also, if we put
\(
t=\lambda^{-1}(ae-fb),
\)
then \(ae-fb=\lambda t\), and hence
\(
x\in L_{\lambda d,\lambda t}(\lambda a,\lambda f)\subseteq G_\ell(d,t).
\)
Thus
\(
U_\ell=\bigcup_{d,t\in\mathbb F_p} G_\ell(d,t).
\)
To see that this union is disjoint, suppose that
\(
x\in G_\ell(d,t)\cap G_\ell(d',t').
\)
Then for some \(\lambda\in\mathbb F_p^\times\),
\(
x\in L_{\lambda d,\lambda t}(\lambda a,\lambda f)\cap L_{\lambda d',\lambda t'}(\lambda a,\lambda f).
\)
Therefore, \(x\) has the fourth coordinate in both \(\lambda d\) and \(\lambda d'\), and also satisfies both equations
\(
ae-fb=\lambda t\text{ and }ae-fb=\lambda t'.
\)
Since \(\lambda\neq 0\), it follows that \(d=d'\) and \(t=t'\). Hence
\[
U_\ell=\bigsqcup_{d,t\in\mathbb F_p} G_\ell(d,t).
\]
Now let
\(
x\in L_{\lambda d,\lambda t}(\lambda a,\lambda f)~ \text{and }
y\in L_{\mu d,\mu t}(\mu a,\mu f),
\)
where \(\lambda,\mu\in\mathbb F_p^\times\). 
Then
$
x=(\lambda a,b,c,\lambda d,e,\lambda f),~
y=(\mu a,B,C,\mu d,E,\mu f),
$
with
\(
ae-fb=\lambda t
\text{ and }
aE-fB=\mu t.
\)
Since \((\mu a,\mu f)=\mu(\lambda^{-1}(\lambda a,\lambda f))\), the cosets \(C_{(\lambda a,\lambda f)}\)~and~ \(C_{(\mu a,\mu f)}\) are linearly dependent. Therefore, the commuting equation \eqref{commuting1} and \eqref{commuting2} is automatic and \eqref{commuting3} becomes $aE-fB=\mu\lambda^{-1}(ae-fb)$. Also,\(
\mu\lambda^{-1}(ae-fb)=\mu\lambda^{-1}(\lambda t)=\mu t=aE-fB.
\)
Hence every two vertices of \(G_\ell(d,t)\) are adjacent, so \(G_\ell(d,t)\) is a clique.

Next, let $
x\in G_\ell(d,t),~ y\in G_\ell(d',t'),$
and suppose that \(x\) and \(y\) commute. Choose \(\lambda,\mu\in\mathbb F_p^\times\) such that
\(
x\in L_{\lambda d,\lambda t}(\lambda a,\lambda f),
y\in L_{\mu d',\mu t'}(\mu a,\mu f).
\)
Again by commuting equations, \(C_{(\lambda a,\lambda f)}\) and \(C_{(\mu a,\mu f)}\), we obtain
\(
\mu d'=\mu d \text{ and }
aE-fB=\mu\lambda^{-1}(ae-fb).
\)
Since \(\mu\neq 0\), the first equality gives \(d'=d\). Also,
\(
\mu t'=aE-fB=\mu\lambda^{-1}(ae-fb)=\mu\lambda^{-1}(\lambda t)=\mu t,
\)
whence \(t'=t\). Therefore distinct sets \(G_\ell(d,t)\) are pairwise nonadjacent.

Finally, for fixed \(\lambda\in\mathbb F_p^\times\), the set
\(
L_{\lambda d,\lambda t}(\lambda a,\lambda f)
\)
is determined inside the coset \(C_{(\lambda a,\lambda f)}\) by the single linear equation
\(
ae-fb=\lambda t.
\)
Since \((a,f)\neq (0,0)\), this equation has exactly \(p\) solutions in \((b,e)\), while \(c\) is arbitrary. Thus
\(
|L_{\lambda d,\lambda t}(\lambda a,\lambda f)|=p^2.
\)
As \(\lambda\) ranges over \(\mathbb F_p^\times\), it follows that
\(
|G_\ell(d,t)|=(p-1)p^2.
\)
Since there are \(p^2\) choices of \((d,t)\in\mathbb F_p^2\)  and \(U_\ell\) is the disjoint union of the pairwise nonadjacent cliques \(G_\ell(d,t)\), we conclude that
$
\Gamma[U_\ell]\cong p^2K_{(p-1)p^2}.
$
\end{proof}

\begin{theorem}
Let \((a,f)\) and \((A,F)\) be linearly independent in \(\mathbb{F}_p^2\). Then
\(
\Gamma[L_0(a,f),L_0(A,F)] \cong pK_{p^2,p^2}.
\)
\end{theorem}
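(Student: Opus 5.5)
Since $d=D=0$ on both zero-layers, commuting equations \eqref{commuting1} and \eqref{commuting2} hold automatically for $x=(a,b,c,0,e,f)\in L_0(a,f)$ and $y=(A,B,C,0,E,F)\in L_0(A,F)$. By Theorem \ref{L0}, the only remaining condition is \eqref{commuting3}:
\[
aE+bF=Ae+Bf .
\]
I rewrite it by separating the variables of $x$ from those of $y$, which gives
\[
bF-Ae \;=\; Bf-aE .
\]
I then define two linear functionals on $\mathbb F_p^2$ by
\[
\psi_1(b,e)=Fb-Ae, \qquad \psi_2(B,E)=fB-aE .
\]
With these, $x\sim y$ if and only if $\psi_1(b,e)=\psi_2(B,E)$. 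The value $c$ (respectively $C$) plays no role.

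Next I partition each layer by the value of its functional. For $s\in\mathbb F_p$, set
\[
P_s=\{x\in L_0(a,f):\psi_1(b,e)=s\}, \qquad Q_s=\{y\in L_0(A,F):\psi_2(B,E)=s\}.
\]
Each functional is nonzero: $\psi_1$ vanishes identically only if $(A,F)=(0,0)$, and $\psi_2$ only if $(a,f)=(0,0)$. A nonzero functional on $\mathbb F_p^2$ is surjective, so each fibre $\psi_i^{-1}(s)$ is an affine line with $p$ points. Since $c$ (or $C$) is free, this gives $|P_s|=|Q_s|=p^2$ for every $s$.

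Adjacency now reads directly off the partition. Every vertex of $P_s$ commutes with every vertex of $Q_s$, and there are no edges between $P_s$ and $Q_{s'}$ when $s\neq s'$. Hence the bipartite graph between the two layers is the disjoint union over $s\in\mathbb F_p$ of copies of $K_{p^2,p^2}$. That gives $pK_{p^2,p^2}$, as in the earlier proof for proportional cosets.

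The only delicate point is that linear independence of $(a,f)$ and $(A,F)$ is not actually needed for the count; it only guarantees, through Theorem \ref{L0}, that these zero-layer edges are all the edges between the cosets. The key check is nondegeneracy of $\psi_1$ and $\psi_2$. This holds because both pairs are nonzero, and I would state it explicitly.
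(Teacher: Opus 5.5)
Your proposal is correct and is essentially the paper's proof. The paper likewise rewrites the condition as $Ae-Fb=aE-fB$ and splits the two zero-layers into the level sets $M_t(a,f)$, $N_t(A,F)$ of these functionals, each of size $p^2$, with $M_t$ completely joined to $N_t$ and no other edges; your functionals differ only by a sign. One small quibble: the reduction to \eqref{commuting3} follows from $d=D=0$ alone, not from Theorem \ref{L0}, which only explains why this bipartite graph accounts for all edges between the two cosets.
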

\begin{proof}
Let
\[
L_0(a,f)=\{(a,b,c,0,e,f): b,c,e\in \mathbb{F}_p\}
\] and 
\[
L_0(A,F)=\{(A,B,C,0,E,F): B,C,E\in \mathbb{F}_p\}.
\]
Take
\(
x=(a,b,c,0,e,f)\in L_0(a,f),~ y=(A,B,C,0,E,F)\in L_0(A,F).
\)
From the multiplication rule in \(UT(4,p)\), \(x\) and \(y\) commute if and only if
\(
aE+bF=Ae+Bf,
\)
which is equivalent to
\(
Ae-Fb=aE-fB.
\)
For each \(t\in \mathbb{F}_p\), define
\(
M_t(a,f)=\{(a,b,c,0,e,f)\in L_0(a,f): Ae-Fb=t\}
\)
and
\(
N_t(A,F)=\{(A,B,C,0,E,F)\in L_0(A,F): aE-fB=t\}.
\)
Then
\(
L_0(a,f)=\bigcup_{t\in \mathbb{F}_p} M_t(a,f),
L_0(A,F)=\bigcup_{t\in \mathbb{F}_p} N_t(A,F).
\)
Since \((A,F)\ne (0,0)\), the equation \(Ae-Fb=t\) is a nonzero linear equation in the variables \(b,e\), which has exactly \(p\) solutions in \(\mathbb{F}_p^2\). As \(c\) is free, it follows that
\(
|M_t(a,f)|=p^2.
\)
Similarly,
\(
|N_t(A,F)|=p^2.
\)
A vertex \(x\in L_0(a,f)\) and a vertex \(y\in L_0(A,F)\) commute if and only if
\(
Ae-Fb=aE-fB,
\)
that is, if and only if \(x\) and \(y\) belong to the blocks corresponding to the same parameter \(t\). Hence, every vertex of \(M_t(a,f)\) is adjacent to every vertex of \(N_t(A,F)\), while no vertex of \(M_s(a,f)\) is adjacent to a vertex of \(N_t(A,F)\) whenever \(s\ne t\).
Therefore, for each \(t\in \mathbb{F}_p\),
\(
\Gamma[M_t(a,f),N_t(A,F)]\cong K_{p^2,p^2}.
\)
Since there are \(p\) possible values of \(t\), the bipartite graph between \(L_0(a,f)\) and \(L_0(A,F)\) is the disjoint union of \(p\) complete bipartite graphs \(K_{p^2,p^2}\). Thus
\[
\Gamma[L_0(a,f),L_0(A,F)]\cong pK_{p^2,p^2}.
\]
\end{proof}
The above theorem shows that the interaction between $L_0(a,f)$ and $L_0(A,F)$ is governed by the linear form $Ae-Fb$. Indeed, the commuting condition \eqref{commuting3} naturally partitions $L_0(a,f)$ into $p$ level sets $M_t$. Similarly, $L_0(A,F)$ decomposes into corresponding level sets $N_t$. When additional layers such as $L_0(a_1,f_1)$ are considered, further linear forms arise in the commuting condition. These induce additional partitions of $L_0(a,f)$. The intersections of these partitions refine the structure of $L_0(a,f)$ into smaller subsets, each simultaneously describing the interaction with the corresponding $L_0$-layers of other direction parts.
\begin{theorem}{\label{S1}}
For a direction $\ell=\langle(a,f)\rangle\subset \mathbb{F}_p^2$, define
\[
S_\ell=\{(0,B,C,0,E,0)\in H:aE=fB\} \subset H.
\]
Then the following statements hold.
\begin{enumerate}
\item For every $x\in U_\ell$,
\(
N_H(x)=S_\ell.
\)

\item If $\ell\neq {\ell}'$, then
\(
S_\ell\cap S_{\ell'}=Z(G),
\)
\item
\(
\bigcup_{\ell}S_\ell=\{(0,B,C,0,E,0):B,C,E\in\mathbb{F}_p\}.
\)
\end{enumerate}
\end{theorem}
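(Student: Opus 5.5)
Throughout, $N_H(x)$ denotes the set of elements of $H$ commuting with $x$, that is, $H\cap C_G(x)$. All three parts will follow from the commuting equations \eqref{commuting1}--\eqref{commuting3}, specialised to the case where one element lies in $H$. I expect no part to pose a real obstacle. The one point needing care is that $S_\ell$ must depend only on the line $\ell$, not on the chosen representative $(a,f)$ or on the scalar $\lambda$.

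\textbf{Part (1).} I will first check that $S_\ell$ is well defined. Replacing $(a,f)$ by $\mu(a,f)$ with $\mu\neq0$ multiplies the equation $aE=fB$ by $\mu$, so its solution set is unchanged.

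Now take $x\in U_\ell$. Then $x=(\lambda a,b,c,d,e,\lambda f)$ for some $\lambda\in\mathbb F_p^\times$. Let $h=(0,B,C,D,E,0)\in H$. Equations \eqref{commuting1} and \eqref{commuting2} read $\lambda aD=0$ and $\lambda fD=0$. Since $(\lambda a,\lambda f)\neq(0,0)$, these force $D=0$; this is the same reduction used in Lemma \ref{lemma0}. Equation \eqref{commuting3} becomes $\lambda aE+b\cdot 0=0\cdot e+B\lambda f$, that is, $aE=fB$ after dividing by $\lambda$. Hence $h$ commutes with $x$ exactly when $h\in S_\ell$, so $N_H(x)=S_\ell$.

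As a consistency check, $|S_\ell|=p^2$, in agreement with Lemma \ref{lemma0}. Depending on convention, $N_H(x)$ might be read as excluding central elements; in that case it equals $S_\ell\setminus Z(G)$. I would note this in a remark.

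\textbf{Part (2).} Let $\ell=\langle(a,f)\rangle$ and $\ell'=\langle(a',f')\rangle$ be distinct lines. An element $(0,B,C,0,E,0)$ of $S_\ell\cap S_{\ell'}$ satisfies the linear system
\[
aE-fB=0,\qquad a'E-f'B=0 .
\]
The coefficient matrix has determinant $-af'+fa'$. This is nonzero because $(a,f)$ and $(a',f')$ are linearly independent. So $B=E=0$, the element is $(0,0,C,0,0,0)\in Z(G)$, and the intersection is contained in $Z(G)$. Conversely, $Z(G)\subseteq S_\ell$ for every $\ell$, so equality holds.

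\textbf{Part (3).} Each $S_\ell$ lies in $T:=\{(0,B,C,0,E,0)\}$, so $\bigcup_\ell S_\ell\subseteq T$. For the reverse inclusion, take $(0,B,C,0,E,0)\in T$ and split into two cases.
\begin{itemize}
\item If $(B,E)\neq(0,0)$, choose the direction $\ell=\langle(B,E)\rangle$, i.e. $(a,f)=(B,E)$. Then $aE=BE=EB=fB$, so the element lies in $S_\ell$.
\item If $(B,E)=(0,0)$, the element is central and lies in every $S_\ell$.
\end{itemize}
Hence $\bigcup_\ell S_\ell=T$.

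As a final check, the count agrees. Each $S_\ell$ has $p^2$ elements, the $p+1$ lines pairwise meet exactly in $Z(G)$, which has $p$ elements, and so
\[
(p+1)(p^2-p)+p=p^3=|T|.
\]
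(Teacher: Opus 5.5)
Your proof is correct and follows the paper's argument essentially step for step: the same reduction of the commuting equations to $D=0$ and $aE=fB$ in part (1), the same linear-independence argument in part (2), and the same choice $\ell=\langle(B,E)\rangle$ in part (3). Your additions fill small points the paper leaves implicit: the well-definedness of $S_\ell$, the reverse inclusion $Z(G)\subseteq S_\ell\cap S_{\ell'}$, and the separate treatment of central elements in part (3).
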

\begin{proof}
    Let $x=(\lambda a,b,c,d,e,\lambda f)\in U_\ell$ and $h=(0,B,C,D,E,0)\in H$.From the commuting equations, $x$ commutes with $h$ if and only if 
\[
(\lambda a)D=0,~D(\lambda f)=0,~(\lambda a)E=(\lambda f)B.
\]
Since $(a,f)\neq (0,0)$ and $\lambda\neq 0$, the first two equations imply $D=0$. The third equation then reduces to $aE=fB$. Thus, $xh=hx$ if and only if $h\in S_\ell$. Hence $N_H(x)=S_\ell$.
Now suppose $\ell\neq \ell'$ with $\ell=\langle(a,f)\rangle$ and $\ell'=\langle(A,F)\rangle$.
If $h=(0,B,C,0,E,0)\in S_\ell\cap S_{\ell'}$, then
\(
aE=fB,~ AE=FB.
\)
Since $(a,f)$ and $(A,F)$ are linearly independent, the only solution is $B=E=0$.
Thus $h=(0,0,C,0,0,0)\in Z(G)$, proving $S_\ell\cap S_{\ell'}=Z(G)$. 
Finally, every element of the form $(0,B,C,0,E,0)$ satisfies $
aE=fB$ for the direction ${\ell'}=\langle(B,E)\rangle$. Hence it lies in $S_\ell$.
Therefore
\(
\bigcup_{\ell}S_\ell=\{(0,B,C,0,E,0):B,C,E\in\mathbb{F}_p\}.
\)
\end{proof}
\section{Chromatic and Independence numbers}
In this section, we study the chromatic number and independence number of the reduced commuting graph of $UT(4,p)$. We also show that the reduced commuting graph is not perfect.
\begin{proposition}
\(\Gamma_{\mathrm{red}}(G)\) is not perfect.
\end{proposition}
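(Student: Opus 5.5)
The plan is to exhibit an induced odd cycle of length $5$ in $\Gamma_{\mathrm{red}}(G)$. A perfect graph cannot contain such a cycle: the induced subgraph $C_5$ has clique number $2$ but chromatic number $3$. No appeal to the Strong Perfect Graph Theorem is needed, since this is exactly the definition of perfectness recalled in Section~2. Everything can be checked directly with the commuting equations \eqref{commuting1}, \eqref{commuting2} and \eqref{commuting3}, so the proof is a finite verification.

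The cycle is built from the structure of Sections~4--5. Two vertices come from $H$, where everything commutes, and they are chosen in different sets $S_\ell$ of Theorem~\ref{S1}. The other three vertices come from zero-layers $L_0$ of different directions, where Theorem~\ref{L0} allows edges. Concretely I take
\[
u_1=(1,0,0,0,0,0),\quad h_1=(0,1,0,0,0,0),\quad h_2=(0,0,0,0,1,0),\quad u_2=(0,0,0,0,1,1),\quad w=(1,-1,0,0,0,1).
\]
None of these is central, and the intended cycle is $u_1\sim h_1\sim h_2\sim u_2\sim w\sim u_1$.

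First, compute the relevant centralizer conditions from the three equations:
\begin{itemize}
\item $u_1$ commutes with $(A,B,C,D,E,F)$ iff $D=E=0$;
\item $h_1$ commutes with $(A,B,C,D,E,F)$ iff $F=0$;
\item $h_2$ commutes with $(A,B,C,D,E,F)$ iff $A=0$.
\end{itemize}
For pairs among $u_2$, $w$ and $u_1$, evaluate \eqref{commuting3} directly.

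This gives the five cycle edges:
\begin{itemize}
\item $h_1\sim h_2$, since $H$ is abelian;
\item $u_1\sim h_1$, since $D=E=0$ for $h_1$;
\item $h_2\sim u_2$, since $A=0$ for $u_2$;
\item $u_2\sim w$, since $aE+bF=1\cdot 0+0\cdot 1=0$ and $Ae+Bf=1\cdot 1+(-1)\cdot 1=0$, with $d=D=0$;
\item $w\sim u_1$, since $D=E=0$ for $w$.
\end{itemize}

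Then check the five non-edges:
\begin{itemize}
\item $u_1\not\sim h_2$ and $u_1\not\sim u_2$, because $E=1$ in both;
\item $h_1\not\sim u_2$ and $h_1\not\sim w$, because $F=1$ in both;
\item $h_2\not\sim w$, because $A=1$.
\end{itemize}
So the five vertices induce $C_5$, and $\chi(C_5)=3\neq 2=\omega(C_5)$. This shows $\Gamma_{\mathrm{red}}(G)$ is not perfect.

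The main obstacle is choosing the fifth vertex $w$. It must commute with both $u_1$ and $u_2$, but with neither $h_1$ nor $h_2$, so it needs $A\neq0$ and $F\neq 0$ while \eqref{commuting3} still holds against $u_2$. The coefficient $b=-1$ is what balances that equation. This choice also works for $p=2$, where $-1=1$ and the identity $1+1=0$ holds in $\mathbb F_2$. So the same five vertices work uniformly for all primes, and I would verify them once, explicitly, in that order.
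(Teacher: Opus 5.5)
Your proof is correct and follows the same route as the paper: exhibit an explicit induced $5$-cycle in $\Gamma_{\mathrm{red}}(G)$ and check all ten pairs directly against the commuting equations. The differences are minor. Your five vertices work uniformly for every prime, including $p=2$, whereas the paper gives separate vertex sets for odd $p$ and for $p=2$. You also write out the verification that the paper leaves as ``a direct computation.''
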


\begin{proof}
It suffices to show that \(\Gamma_{\mathrm{red}}(G)\) contains an induced cycle of length \(5\).
If \(p\) is odd, consider the five noncentral elements
\[(-1,-1,0,0,-1,-1),~(-1,0,0,0,-1,0),~(-1,1,0,0,-1,0),\]
\[(0,-1,0,0,-1,-1),~(1,-1,0,0,0,0).
\]
If \(p=2\), consider instead
\(
(0,0,0,0,0,1),~ (0,0,0,0,1,0),~ (0,1,0,0,0,0),
\)
\(
(1,0,0,0,0,1),~(1,1,0,0,1,0).
\)
A direct computation shows that each of these sets induce a cycle of length \(5\).
Thus \(\Gamma_{\mathrm{red}}(G)\) contains an induced \(5\)-cycle for every prime \(p\). Therefore \(\Gamma_{\mathrm{red}}(G)\) is not perfect.
\end{proof}
\begin{proposition}
The induced subgraph \(\Gamma(G)\setminus \mathcal L_{0}^{*}\) is perfect, where $\mathcal L_0^*
=
\{(a,b,c,0,e,f):(a,f)\in \mathbb F_p^2\setminus\{(0,0)\},\ b,c,e\in\mathbb F_p\}$
\end{proposition}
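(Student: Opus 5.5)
The plan is to make the induced subgraph explicit using the coset, layer and direction decompositions of Section~5, and then show that it is chordal. Chordal graphs are perfect, so this suffices (alternatively, the strong perfect graph theorem can be invoked directly).

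\textbf{Step 1: the vertex set.} The vertex set of \(\Gamma(G)\setminus\mathcal L_0^*\) splits as \(H\cup R\), where
\[
R=\{(a,b,c,d,e,f): (a,f)\neq(0,0),\ d\neq 0\}.
\]
Here \(H\) is abelian, so it induces a clique. By construction, \(R\) is the union over all directions \(\ell\) of the non-zero layers \(L_d(\lambda a,\lambda f)\), \(d\neq 0\), inside the direction parts \(U_\ell\).

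\textbf{Step 2: adjacency inside \(R\).} I would record three facts.
\begin{enumerate}
\item Two elements of \(R\) lying in different direction parts never commute. By the remark preceding Theorem~\ref{L0}, if the directions differ then \eqref{commuting1} and \eqref{commuting2} force \(d=D=0\), which is impossible in \(R\).
\item Inside a single \(U_\ell\), Theorem~\ref{giant_clique} shows that \(\Gamma[U_\ell]\) is a disjoint union of the cliques \(G_\ell(d,t)\). The sets \(G_\ell(d,t)\) with \(d\neq0\) are unions of layers with nonzero fourth coordinate, so they lie entirely in \(R\).
\item Consequently \(\Gamma[R]\) is a disjoint union of cliques \(Q_i=G_{\ell_i}(d_i,t_i)\), \(d_i\neq 0\), with no edges between distinct \(Q_i\).
\end{enumerate}

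\textbf{Step 3: edges between \(R\) and \(H\).} By Theorem~\ref{S1}(1), every \(x\in U_\ell\) has \(N_H(x)=S_\ell\). So each clique \(Q_i\) is completely joined to \(S_{\ell_i}\subseteq H\) and has no neighbours in \(H\setminus S_{\ell_i}\). The whole graph is therefore a clique \(H\) together with pairwise non-adjacent cliques \(Q_i\), each attached to a subset of \(H\).

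\textbf{Step 4: chordality.} Suppose \(C\) is an induced cycle of length \(k\ge 4\).
\begin{itemize}
\item \(C\) contains at most two vertices of the clique \(H\), and if it contains two they are adjacent.
\item \(C\) contains at most one vertex from each \(Q_i\). Two vertices of the same \(Q_i\) are adjacent and have identical neighbourhoods outside \(Q_i\), namely \(S_{\ell_i}\). So if both lie on \(C\), their other cycle-neighbours would give a chord or force \(k=3\).
\item Vertices from distinct \(Q_i\) are non-adjacent. Hence every \(R\)-vertex on \(C\) has both of its cycle-neighbours in \(H\), so \(C\) contains exactly two \(H\)-vertices \(h_1,h_2\).
\item Since \(h_1\sim h_2\), any \(R\)-vertex \(q\) on \(C\) gives the triangle \(h_1 q h_2\). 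Thus \(k=3\), a contradiction. (If \(C\) had no \(R\)-vertex, it would lie in the clique \(H\), again impossible.)
\end{itemize}
So the graph is chordal, hence perfect.

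The main obstacle is Step 2. One must verify carefully that removing \(\mathcal L_0^*\) kills every cross-direction edge and every edge between different blocks \(G_\ell(d,t)\). This rests on Theorem~\ref{giant_clique} and on the proportionality argument before Theorem~\ref{L0}. Once this block structure is in place, the chordality argument is short.
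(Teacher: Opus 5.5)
Your proposal is correct and takes essentially the same route as the paper. Both arguments describe $\Gamma(G)\setminus\mathcal L_0^*$ as the clique $H$ together with the pairwise nonadjacent cliques $G_\ell(d,t)$, $d\neq 0$, each completely joined to $S_\ell$ and to nothing else in $H$, and conclude that the graph is chordal and hence perfect. Your Step~4 spells out the chordality that the paper only asserts (``gluing cliques along cliques''): two vertices of one block cannot both lie on an induced cycle of length at least $4$, and any block vertex on such a cycle has two adjacent neighbours in $H$.
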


\begin{proof}
By the decomposition of each direction part \(U_{\ell}\), every vertex of \(\Gamma(G)\setminus (H_{0}\cup \mathcal L_{0}^{*})\) lies in a giant clique \(G_{\ell}(d,t)\) with \(d\neq 0\). Moreover, these cliques are pairwise nonadjacent. Indeed, within a fixed direction part \(U_{\ell}\), distinct giant cliques \(G_{\ell}(d,t)\) are pairwise nonadjacent, and across different direction parts, interaction occurs only through the zero-layers. Hence, after deleting \(\mathcal L_{0}^{*}\), no edges remain between distinct cliques \(G_{\ell}(d,t)\) with \(d\neq 0\). Thus the induced subgraph on \(\Gamma(G)\setminus (H_{0}\cup \mathcal L_{0}^{*})\) is a disjoint union of cliques.
It remains to describe the adjacency with \(H_{0}\). For each direction \(\ell\), every vertex of \(U_{\ell}\) has the same neighborhood in \(H\), namely \(S_{\ell}\). Therefore every clique \(G_{\ell}(d,t)\) with \(d\neq 0\) is joined precisely to the clique \(S_{\ell}^{(0)}:=S_{\ell}\setminus Z(G)\) inside \(H_{0}\). Since \(H_{0}\) itself is a clique, it follows that \(\Gamma(G)\setminus \mathcal L_{0}^{*}\) consists of the clique \(H_{0}\), together with pairwise nonadjacent cliques \(G_{\ell}(d,t)\), each attached along a clique \(S_{\ell}^{(0)}\subseteq H_{0}\).
Consequently, \(\Gamma(G)\setminus \mathcal L_0^{*}\) is obtained by gluing pairwise disjoint cliques to the clique \(H_{0}\) along cliques. Hence it is chordal, and therefore \(\Gamma(G)\setminus \mathcal L_{0}^{*}\) is perfect.
\end{proof}
\begin{theorem}
    $\chi(\Gamma_{red}(G))=p^4-p$
\end{theorem}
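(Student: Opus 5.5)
We prove the two inequalities $\chi(\Gamma_{red}(G))\ge p^4-p$ and $\chi(\Gamma_{red}(G))\le p^4-p$.

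\textbf{Lower bound.} The set $H_0:=H\setminus Z(G)$ is a clique of size $p^4-p$ in $\Gamma_{red}(G)$, since $H$ is abelian and $|H|=p^4$. Hence $\chi(\Gamma_{red}(G))\ge\omega(\Gamma_{red}(G))\ge p^4-p$.

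\textbf{Upper bound: strategy.} Give each vertex of $H_0$ its own colour, so the palette is $H_0$ itself, and extend this to all of $G\setminus H$ without new colours. By Theorem~\ref{S1}, every $x\in U_\ell$ has the same neighbourhood $S_\ell$ in $H$. So it suffices to find a map $\varphi:G\setminus H\to H_0$ satisfying two conditions:
\begin{itemize}
\item $\varphi(x)\notin S_\ell$ whenever $x\in U_\ell$;
\item $\varphi(x)\neq\varphi(y)$ whenever $x\neq y$ are commuting vertices of $G\setminus H$.
\end{itemize}
I split $G\setminus H$ into the vertices with $d\neq 0$ and the zero-layer union $\mathcal L_0^*$. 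There are no edges between these two parts. Indeed, if $x=(a,b,c,0,e,f)$ commutes with $y$ having $D\neq0$, then equations \eqref{commuting1} and \eqref{commuting2} give $aD=0$ and $Df=0$, so $a=f=0$ and $x\in H$. The two parts can therefore be coloured independently, each only needing to avoid its own $S_\ell$.

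\textbf{Vertices with $d\neq0$.} Set $\varphi(a,b,c,d,e,f)=(0,b,c,d,e,0)$. This lies in $H_0$ and has fourth coordinate nonzero, so it is never in any $S_\ell$. Suppose two distinct commuting vertices $x,y$ have the same image. By Theorem~\ref{giant_clique} and Theorem~\ref{L0}, they lie in the same direction part, with $(A,F)=k(a,f)$ and $D=kd$. Equal images force $D=d$, hence $k=1$, and then $x=y$, a contradiction. So $\varphi$ is proper on this part. The colours used have $d\neq 0$, so the set $H^{\ast}:=\{h\in H: D\neq 0\}$, of size $p^3(p-1)$, is free to be reused on $\mathcal L_0^*$.

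\textbf{The zero-layer union $\mathcal L_0^*$ (main obstacle).} Here I need a proper colouring of the induced graph on $\mathcal L_0^*$ in which each vertex of $U_\ell$ avoids $S_\ell$. The available colours are $H^{\ast}$ together with the $d=0$ elements of $H_0\setminus S_\ell$. A naive greedy count fails: the degree of a vertex inside $\mathcal L_0^*$ is about $p^4-p^2$, which is too large. So I would build an explicit linear-type colouring.

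The first attempt is $\psi(a,b,c,0,e,f)=(0,b,c,\delta,e,0)$, where $\delta\neq0$ is determined by the direction of $(a,f)$. The following points are used to check properness.
\begin{itemize}
\item Vertices in the same direction part commute only inside a giant clique $G_\ell(0,t)$, by Theorem~\ref{giant_clique}. Within such a clique, distinct elements differ either in the scalar $\lambda$ or in $(b,c,e)$.
\item Vertices in different directions commute only along the blocks $M_t,N_t$ of the $pK_{p^2,p^2}$ description.
\end{itemize}
Collisions between different scalar multiples $\lambda$ inside a giant clique then remain, because $\psi$ does not see $\lambda$. These have to be separated by twisting $(b,e)$ by $\lambda$, for example by using $(0,\lambda^{-1}b,c,\delta,\lambda^{-1}e,0)$, or by an affine shift along the line $\ell$. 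Any leftover conflicts would be absorbed by the $d=0$ colours outside $S_\ell$. Verifying that such a twisted map is injective on every edge of $\mathcal L_0^*$, across all $p+1$ directions simultaneously, is the step I expect to be hardest.

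Once this is done, the colouring uses only the $p^4-p$ colours of $H_0$, which gives $\chi(\Gamma_{red}(G))\le p^4-p$ and hence equality.
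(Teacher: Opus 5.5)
Your lower bound is correct. So is the fact that the $d\neq0$ part of $G\setminus H$ has no edges to $\mathcal L_0^*$, and so is the map $\varphi$ on the $d\neq0$ part. The gap is the colouring of $\mathcal L_0^*$. It is the only part of the upper bound with real content, and you never produce it. Moreover, the maps you sketch fail:
\begin{itemize}
\item Since $\psi$ attaches a value $\delta\in\mathbb F_p^\times$ to each of the $p+1$ directions, two distinct directions $\langle(a,f)\rangle\neq\langle(A,F)\rangle$ receive the same $\delta$. Then $(a,0,0,0,0,f)$ and $(A,0,0,0,0,F)$ commute, since all three commuting equations read $0=0$, yet both get the colour $(0,0,0,\delta,0,0)$.
\item The twisted version has the same collision, because here $b=e=0$.
\item Writing $(a,f)=\lambda(a_0,f_0)$, the twisted map is also constant on $\{(\lambda a_0,\lambda b,c,0,\lambda e,\lambda f_0):\lambda\in\mathbb F_p^\times\}$, which is a clique.
\end{itemize}
Saying that leftover conflicts are ``absorbed'' by the $d=0$ colours is not an argument.

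What you overlooked is that the greedy count you dismissed actually succeeds, with one colour to spare. Take $x\in\mathcal L_0^*\cap U_\ell$. Then $|C_G(x)|=p^4$: $D=0$ is forced, $(A,B,E,F)$ satisfy one nontrivial linear equation, and $C$ is free. Discarding $Z(G)$, $x$ itself and the $p^2-p$ vertices of $S_\ell^{(0)}$ leaves exactly $p^4-p^2-1$ neighbours, all inside $\mathcal L_0^*$. Your list $H_0\setminus S_\ell^{(0)}$ has $p^4-p^2$ colours, so greedy list colouring of $\mathcal L_0^*$ in any order completes your proof.

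Even more directly, every noncentral $x\notin H_0$ has $|C_G(x)|\le p^4$, hence degree at most $p^4-p-1$ in $\Gamma_{red}(G)$. Colouring the clique $H_0$ first and then extending greedily gives $\chi\le p^4-p$ without needing $\varphi$ at all.

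The paper argues differently. Since $\Gamma[U_\ell]\cong p^2K_{(p-1)p^2}$, it colours each direction part with its own block of $(p-1)p^2$ colours, using disjoint blocks for different directions. This handles all cross-direction edges among the zero-layers at once. It then realizes these $p^4-p^2$ colours inside $H_0$ so that $U_\ell$ avoids $S_\ell^{(0)}$, using the colours of $S_{\ell-1}^{(0)}$ and of $R=H_0\setminus\bigcup_\ell S_\ell^{(0)}$ via a counting argument.
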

\begin{proof}
Let
$
G_0=\Gamma_{red}(G),~ H_0=H\setminus Z(G) \text{ and }S_\ell^{(0)}=S_{\ell} \setminus Z(G)$. Since $H_0$ is a clique of $p^4-p$ vertices, $\chi(G_0) \geq p^4-p$. Therefore, if one can show that the colors already used in $H_0$ can be redistributed to $G_0 \setminus H_0$ so as to obtain a proper coloring of $G_0$, then $\chi(G_0)=p^4-p$..
By Theorem \ref{giant_clique},
$\Gamma[U_\ell]\cong p^2K_{(p-1)p^2}.$ That is, $\Gamma[U_\ell]$ is a disjoint union of $p^2$ cliques $K_{(p-1)p^2}$.
Hence each $U_\ell$ can be properly colored using $(p-1)p^2$ colors. There are $p+1$ such $U_{\ell}$. 
However, by Theorem \ref{L0}, the layers $L_0(a,f)$ of $U_\ell$ interact with the layers $L_0(A,F)$ of $U_{\ell'}$. Therefore, at this stage $(p+1)(p-1)p^2=p^4-p^2$ colors have been assigned within $G_0\setminus H_0$. Now, the following arguments show how the redistribution of $p^4-p$ colors is carried out on $G_0 \setminus H_0$.
let $R = H_0 \setminus \bigcup_{\ell} S_\ell^{(0)}$. Then
$|R| = (p^4 - p) - (p^3 - p)=p^4-p^3$.
By Theorem \ref{S1}, colors coming from $S_\ell^{(0)}$ are forbidden only for $U_\ell$. Use exactly $|S_{\ell-1}^{(0)}| = p^2 - p$ colors of $S_{\ell-1}^{(0)}$ (cyclic indices mod $p+1$) to color each $U_\ell$ in $G_0 \setminus H_0$. Total number of colors used for coloring $G_0 \setminus H_0$ is $(p+1)(p^2 - p) = p^3 - p$.
Remaining number of colors needed are $p^4 - p^2 - (p^3 - p)$.
Now for all these, we use colors from $R$. This is possible because we have enough colors in $R$ since $|R|=p^3(p-1) > p^4 - p^3 - p^2 + p$. Therefore, only $p^4-p$ colors from $H_0$ are enough to color the whole graph.
\end{proof}
The following theorem expresses the independence number of $UT(4,p)$ in terms of the independence number of the induced subgraph on \(
\mathcal L_0^*
=
\{(a,b,c,0,e,f):(a,f)\in \mathbb F_p^2\setminus\{(0,0)\},\ b,c,e\in\mathbb F_p\},
\)
\begin{theorem}
\(
\alpha(\Gamma(G))=p^3-p+1+\alpha\!\bigl(\Gamma[\mathcal L_0^*]\bigr).
\)
\end{theorem}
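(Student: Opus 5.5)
The plan is to split the vertex set of $\Gamma(G)$ into four pieces: $Z(G)$, $H_0=H\setminus Z(G)$, the zero-layer union $\mathcal L_0^*$, and the ``nonzero part'' $\mathcal N=(G\setminus H)\setminus\mathcal L_0^*$. I would first describe $\mathcal N$ in terms of the giant cliques. By the partition in Theorem \ref{giant_clique}, $\mathcal N$ is the disjoint union of the giant cliques $G_\ell(d,t)$ with $d\neq 0$. There are $p+1$ directions $\ell$, $p-1$ nonzero values of $d$ and $p$ values of $t$. This gives exactly $(p+1)(p-1)p=p^3-p$ such cliques.

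Next I would establish the adjacency facts needed for both bounds.
\begin{enumerate}
\item Within a direction part $U_\ell$, distinct giant cliques are nonadjacent by Theorem \ref{giant_clique}. In particular, $G_\ell(d,t)$ with $d\neq0$ has no neighbours in $G_\ell(0,t')\subseteq\mathcal L_0^*$.
\item Across independent directions, all edges go between zero-layers by Theorem \ref{L0}. Hence no vertex of $\mathcal N$ is adjacent to any vertex of $\mathcal L_0^*$ or of $\mathcal N$ outside its own giant clique.
\item Consider an element $h=(0,B,C,D,E,0)\in H_0$ with $D\neq0$, i.e.\ $h\in R=H_0\setminus\bigcup_\ell S_\ell^{(0)}$. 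The commuting equations \eqref{commuting1} and \eqref{commuting2} with any $x\in G\setminus H$ would force $aD=fD=0$, so $(a,f)=(0,0)$, which is impossible. Thus such an $h$ has no neighbours outside $H$; this is also the content of Theorem \ref{S1}(1).
\end{enumerate}

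For the lower bound I would take one such $h\in R$ (possible since $|R|=p^4-p^3>0$), one vertex from each of the $p^3-p$ giant cliques $G_\ell(d,t)$ with $d\neq 0$, and a maximum independent set of $\Gamma[\mathcal L_0^*]$. By the three facts above these pieces are mutually nonadjacent. Their union is therefore independent, of size $1+(p^3-p)+\alpha(\Gamma[\mathcal L_0^*])$.

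For the upper bound, let $I$ be independent in $\Gamma(G)$. If $I$ meets $Z(G)$, then $|I|=1$, since central elements are adjacent to everything; this is smaller than the claimed value. Otherwise:
\begin{itemize}
\item $|I\cap H_0|\le1$, because $H$ is abelian and hence a clique;
\item $|I\cap G_\ell(d,t)|\le1$ for each of the $p^3-p$ cliques covering $\mathcal N$;
\item $|I\cap\mathcal L_0^*|\le\alpha(\Gamma[\mathcal L_0^*])$.
\end{itemize}
Summing these gives the bound and hence equality. The step requiring the most care is the second adjacency fact: that $\mathcal N$ has no edges to $\mathcal L_0^*$ in either the same or a different direction. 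I would verify it directly from \eqref{commuting1}–\eqref{commuting2}. For linearly independent $(a,f),(A,F)$, commutation forces $d=D=0$. For proportional ones, say $(A,F)=k(a,f)$, it forces $D=kd$, so $d\neq0$ and $D=0$ cannot occur.
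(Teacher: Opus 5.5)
Your proof is correct and takes the same route as the paper. Each of the $p^3-p$ giant cliques $G_\ell(d,t)$ with $d\neq 0$ contributes one vertex, $\mathcal L_0^*$ contributes $\alpha\bigl(\Gamma[\mathcal L_0^*]\bigr)$, and one vertex of $H$ with $D\neq 0$, which has no neighbours outside $H$, contributes the extra $1$. Your write-up is more careful than the paper's sketch: you separate the upper and lower bounds, dispose of central vertices, and check directly from the first two commuting equations that the nonzero-$d$ cliques have no edges to $\mathcal L_0^*$, whether in the same direction or in a different one.
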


\begin{proof}
By Theorem \ref{giant_clique}, for each direction part \(U_\ell\), there are \(p^2-p\) giant cliques of the form \(G_\ell(d,t)\) with \(d\neq 0\). Since there are \(p+1\) direction parts, the total number of such giant cliques is
\(
(p+1)(p^2-p)=p^3-p.
\)
By Theorem \ref{L0}, these giant cliques are pairwise nonadjacent across different direction parts, and each of them is a clique. Hence an independent set may contain exactly one vertex from each of them, but no more. Therefore their total contribution to the independence number is exactly
\(
p^3-p.
\)
The remaining \(p\) giant cliques in each \(U_\ell\) together form \(\mathcal L_0^*\), so their contribution is precisely
\(
\alpha\!\bigl(\Gamma[\mathcal L_0^*]\bigr).
\)
Finally, by Theorem \ref{S1}, there are vertices in $H$ which are nonadjacent to all vertices chosen from the \(p^3-p\) giant cliques and also nonadjacent to every vertex in \(\mathcal L_0^*\). Since \(H\) is a clique, at most one vertex from \(H\) can belong to an independent set. Hence the contribution from \(H\) is exactly \(1\).
Combining these three contributions, we obtain
\(
\alpha(\Gamma(G))=p^3-p+1+\alpha\!\bigl(\Gamma[\mathcal L_0^*]\bigr).
\)
This completes the proof.
\end{proof}
To understand the structure of the induced commuting graph on the zero-layer union $\mathcal L_0^*$, it is natural to look for large cliques that are compatible with the commuting relation. For two vertices
\(
x=(a,b,c,0,e,f),~ y=(A,B,C,0,E,F)\in \mathcal L_0^*,
\)
the commuting condition is \eqref{commuting3}.
Thus, if one can parametrize \(b\) and \(e\) linearly in terms of \(a\) and \(f\) in such a way that the two sides of \eqref{commuting3} become identical for all choices of \((a,f)\) and \((A,F)\), then one obtains a clique.
This suggests seeking constants \(\alpha,\beta,\gamma,\delta\in \mathbb F_p\) such that
\(
b=\alpha a+\beta f,~ e=\gamma a+\delta f
\). If we impose the same relations on the second vertex, $B=\alpha A+\beta F,~E=\gamma A+\delta F$ and substituting these expressions into \eqref{commuting3}, a direct calculation shows that $\delta=-\alpha$. Writing $\gamma= \eta \beta$ for a fixed parameter $\eta \in \mathbb F_p $, we are led to the family,
\[
K_{\alpha,\beta}
=
\left\{
(a,\alpha a+\beta f,c,0,\eta\beta a-\alpha f,f)
:
(a,f)\in \mathbb F_p^2\setminus\{(0,0)\},\ c\in \mathbb F_p
\right\},
\]
where \((\alpha,\beta)\in\mathbb F_p^2\).
The characteristics of this family can be understood from the following theorem.
\begin{theorem}{\label{clique_decomposition}}
Let \(p\) be an odd prime, and choose \(\eta\in \mathbb F_p\) such that \(-\eta\) is a nonsquare in \(\mathbb F_p\). For each \((\alpha,\beta)\in\mathbb F_p^2\), define
\[
K_{\alpha,\beta}
=
\left\{
(a,\alpha a+\beta f,c,0,\eta\beta a-\alpha f,f)
:
(a,f)\in \mathbb F_p^2\setminus\{(0,0)\},\ c\in \mathbb F_p
\right\}.
\]
Then:
\begin{enumerate}
    \item each \(K_{\alpha,\beta}\) is a clique in \(\Gamma[\mathcal L_0^*]\);
    \item the family \(\{K_{\alpha,\beta}:(\alpha,\beta)\in\mathbb F_p^2\}\) is pairwise disjoint;
    \item these cliques cover \(\mathcal L_0^*\).
\end{enumerate}
Hence
\(
\mathcal L_0^*
=
\bigsqcup_{(\alpha,\beta)\in\mathbb F_p^2} K_{\alpha,\beta}.
\)
Moreover, each clique has size
\(
|K_{\alpha,\beta}|=p^3-p.
\)
\end{theorem}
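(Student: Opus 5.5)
The plan is to verify the three claims directly from the commuting equation \eqref{commuting3}. This equation is the only one that matters on \(\mathcal L_0^*\): every element there has \(d=0\), so \eqref{commuting1} and \eqref{commuting2} hold automatically. The whole argument reduces to one symmetric computation and one \(2\times 2\) linear system.

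\textbf{Clique property.} Take \(x=(a,\alpha a+\beta f,c,0,\eta\beta a-\alpha f,f)\) and \(y=(A,\alpha A+\beta F,C,0,\eta\beta A-\alpha F,F)\) in \(K_{\alpha,\beta}\). Substituting into \eqref{commuting3} gives
\[
aE+bF = a(\eta\beta A-\alpha F)+(\alpha a+\beta f)F=\eta\beta aA+\beta fF .
\]
This expression is symmetric under \((a,f)\leftrightarrow(A,F)\), so it equals \(Ae+Bf\), and \(x\) and \(y\) commute. No condition on \(\eta\) is needed for this step. The elements are non-central because \((a,f)\neq(0,0)\).

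\textbf{Disjointness and covering.} Fix an arbitrary \((a,b,c,0,e,f)\in\mathcal L_0^*\). It lies in \(K_{\alpha,\beta}\) exactly when \((\alpha,\beta)\) solves the linear system
\[
a\alpha+f\beta=b,\qquad -f\alpha+\eta a\beta=e .
\]
The coefficient determinant is \(\eta a^2+f^2\). This is the one step needing care, and it is where the hypothesis on \(\eta\) enters.

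I will show the determinant is nonzero for every \((a,f)\neq(0,0)\):
\begin{itemize}
\item If \(a=0\), then \(f\neq0\), so the determinant is \(f^2\neq0\).
\item If \(a\neq0\), vanishing would give \((f/a)^2=-\eta\), contradicting that \(-\eta\) is a nonsquare. Such an \(\eta\) exists because \(p\) is odd.
\end{itemize}
Hence the system has exactly one solution \((\alpha,\beta)\). Existence of a solution gives the covering statement (3), and uniqueness gives the pairwise disjointness statement (2).

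\textbf{Size.} The parametrization \((a,f,c)\mapsto(a,\alpha a+\beta f,c,0,\eta\beta a-\alpha f,f)\) is injective, since \(a\), \(f\), \(c\) are read off directly from coordinates. So \(|K_{\alpha,\beta}|=(p^2-1)p=p^3-p\).

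As a consistency check, \(p^2(p^3-p)=(p^2-1)p^3=|\mathcal L_0^*|\), which agrees with the disjoint-union decomposition.
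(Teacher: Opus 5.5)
Your proof is correct and follows the paper's argument: the same symmetric evaluation of \eqref{commuting3} for the clique property, the same $2\times 2$ system with determinant $\eta a^2+f^2$ for covering, and the same count $(p^2-1)p$ for the size. The one difference is that you get disjointness directly from uniqueness of the solution of that system, whereas the paper adds a separate determinant argument on a homogeneous system in $(a,f)$, with determinant $-(\alpha-\alpha')^2-\eta(\beta-\beta')^2$; that step is redundant, since the paper also notes the solution is unique, so your version is a slight streamlining of the same proof.
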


\begin{proof}
We prove the three assertions in turn.

Take two arbitrary vertices of \(K_{\alpha,\beta}\):
\(
x=(a,\alpha a+\beta f,c,0,\eta\beta a-\alpha f,f)
\) and 
\(
y=(A,\alpha A+\beta F,C,0,\eta\beta A-\alpha F,F).
\)
For vertices of \(\mathcal L_0^*\), the commuting condition \eqref{commuting3} applies. Here
\(
b=\alpha a+\beta f,~ e=\eta\beta a-\alpha f,
\) and
\(
B=\alpha A+\beta F,~ E=\eta\beta A-\alpha F.
\)
Now
\[
\begin{aligned}
aE+bF
=a(\eta\beta A-\alpha F)+(\alpha a+\beta f)F
&=\eta\beta aA-\alpha aF+\alpha aF+\beta fF \\
&=\eta\beta aA+\beta fF.
\end{aligned}
\]
Similarly,
\[
\begin{aligned}
Ae+Bf
=A(\eta\beta a-\alpha f)+(\alpha A+\beta F)f
&=\eta\beta aA-\alpha Af+\alpha Af+\beta Ff \\
&=\eta\beta aA+\beta fF.
\end{aligned}
\]
Thus \eqref{commuting3} is satisfied and 
hence \(x\) and \(y\) commute. Hence any two vertices of \(K_{\alpha,\beta}\) are adjacent and \(K_{\alpha,\beta}\) is a clique.

Take an arbitrary vertex
\(
x=(a,b,c,0,e,f)\in\mathcal L_0^*.
\)
So \((a,f)\neq(0,0)\). We seek \(\alpha,\beta\in\mathbb F_p\) such that
\(
b=\alpha a+\beta f, ~e=\eta\beta a-\alpha f. 
\)
This is the linear system
\[
\begin{pmatrix}
a & f\\
-f & \eta a
\end{pmatrix}
\binom{\alpha}{\beta}
=
\binom{b}{e}.
\]
Its determinant is
\(
\eta a^2+f^2.
\)
If \(a=0\), then \(f\neq 0\), so \(\eta a^2+f^2=f^2\neq 0\). If \(a\neq 0\) and \(\eta a^2+f^2=0\), then
\(
\left(\frac{f}{a}\right)^2=-\eta,
\)
contradicting the assumption that \(-\eta\) is a nonsquare. Hence \(\eta a^2+f^2\neq 0\) for all \((a,f)\neq(0,0)\).
Therefore the system has a unique solution \((\alpha,\beta)\), and hence \(x\in K_{\alpha,\beta}\). Since \(x\) was arbitrary, the union of all \(K_{\alpha,\beta}\) is \(\mathcal L_0^*\).

Suppose
\(
x\in K_{\alpha,\beta}\cap K_{\alpha',\beta'}.
\)
Then there exist \((a,f)\neq(0,0)\), \((a',f')\neq(0,0)\), and \(c,c'\in\mathbb F_p\) such that
\(
x=(a,\alpha a+\beta f,c,0,\eta\beta a-\alpha f,f),
\)
and
\(
x=(a',\alpha' a'+\beta' f',c',0,\eta\beta' a'-\alpha' f',f').
\)
Equality of the first, third, and sixth coordinates gives
\(
a=a',~c=c',~ f=f'.
\)
Comparing the second and fifth coordinates, we obtain
\(
\alpha a+\beta f=\alpha' a+\beta' f, 
\)
\(
\eta\beta a-\alpha f=\eta\beta' a-\alpha' f.
\)
which implies
\[
(\alpha-\alpha')a+(\beta-\beta')f=0,
 \text{ and }
\eta(\beta-\beta')a-(\alpha-\alpha')f=0.
\]
Thus \((a,f)\neq(0,0)\) is a nonzero solution of the homogeneous system
\[
\begin{pmatrix}
\alpha-\alpha' & \beta-\beta'\\
\eta(\beta-\beta') & -(\alpha-\alpha')
\end{pmatrix}
\binom{a}{f}
=
\binom00.
\]
Hence the determinant must vanish. That is,
\[
-(\alpha-\alpha')^2-\eta(\beta-\beta')^2=0 \implies 
(\alpha-\alpha')^2+\eta(\beta-\beta')^2=0.
\] If \(\beta-\beta'\neq 0\), then dividing  by \((\beta-\beta')^2\) yields
\(
\left(\frac{\alpha-\alpha'}{\beta-\beta'}\right)^2=-\eta,
\)
again contradicting that \(-\eta\) is a nonsquare. Therefore \(\alpha=\alpha'\) and \(\beta=\beta'\). So distinct members of the family are disjoint.

Finally, for fixed \((\alpha,\beta)\), the set \(K_{\alpha,\beta}\) is determined by
\ \(p^2-1\) choices of \((a,f)\neq(0,0)\) \text{ and } \(p\) choices of \(c\).
Hence
\(
|K_{\alpha,\beta}|=p(p^2-1)=p^3-p.
\)
This completes the proof.
\end{proof}

\begin{corollary}{\label{lower}}
\(
\alpha\bigl(\Gamma[\mathcal L_0^*]\bigr)\le p^2.
\)
\end{corollary}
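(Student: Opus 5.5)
The bound follows from the clique partition in Theorem~\ref{clique_decomposition}: an independent set can meet each clique in at most one vertex, so it suffices to count the cliques.

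Let $I\subseteq \mathcal L_0^*$ be an independent set in $\Gamma[\mathcal L_0^*]$. By Theorem~\ref{clique_decomposition}, for $p$ odd (after fixing $\eta$ with $-\eta$ a nonsquare, which exists since $\mathbb F_p^\times$ contains nonsquares for odd $p$), we have
\[
\mathcal L_0^*=\bigsqcup_{(\alpha,\beta)\in\mathbb F_p^2}K_{\alpha,\beta},
\]
with each $K_{\alpha,\beta}$ a clique. Two distinct vertices of $I$ are nonadjacent, so they cannot lie in the same $K_{\alpha,\beta}$. Hence $|I\cap K_{\alpha,\beta}|\le 1$ for every $(\alpha,\beta)$. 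Summing over the partition gives $|I|\le |\mathbb F_p^2|=p^2$, and therefore $\alpha(\Gamma[\mathcal L_0^*])\le p^2$.

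The only genuine obstacle is the case $p=2$, which Theorem~\ref{clique_decomposition} excludes because it requires $-\eta$ to be a nonsquare. For $p=2$ I would produce a partition of $\mathcal L_0^*$ into $4$ cliques by hand. Here $|\mathcal L_0^*|=3\cdot 8=24$, and the $K_{\alpha,\beta}$ construction would need $\eta a^2+f^2\neq 0$ for all $(a,f)\neq(0,0)$. That fails over $\mathbb F_2$: with $\eta=1$, the vector $(1,1)$ gives $1+1=0$. So the family must be modified, and the natural route is to use $\mathbb F_4$-type structure, i.e.\ an irreducible quadratic over $\mathbb F_2$, in place of $x^2+\eta$.

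Concretely, I would seek cliques of the form $b=\alpha a+\beta f$, $e=\gamma a+\delta f$. Commutation of two such vertices requires, from \eqref{commuting3}, $(\gamma-\beta)(aA)$-type cross terms to cancel, which over $\mathbb F_2$ amounts to a symmetric condition on $\begin{pmatrix}\alpha&\beta\\ \gamma&\delta\end{pmatrix}$ composed with the symplectic form. The goal is to choose four such matrices whose pairwise differences are invertible, forming a spread-like set, so that the corresponding sets partition $\mathcal L_0^*$.

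Alternatively, for $p=2$ one can argue directly. $\mathcal L_0^*$ splits into $3$ zero-layers $L_0(a,f)$, each inducing $2K_4$ by the earlier layer theorem. One then checks by the bipartite structure $pK_{p^2,p^2}$ between layers that an independent set has at most $4$ elements, since each layer contributes at most $2$, and combinations across layers are constrained. I expect this small-case verification to be the fiddly part, while the odd-$p$ case is immediate from the clique cover.
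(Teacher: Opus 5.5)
For odd $p$ your argument is exactly the paper's proof. Theorem~\ref{clique_decomposition} partitions $\mathcal L_0^*$ into $p^2$ cliques $K_{\alpha,\beta}$, and an independent set meets each clique at most once. The paper's proof also silently requires $p$ odd, since it invokes that theorem, so you are right to flag $p=2$.

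However, what you wrote for $p=2$ is a plan, not a proof. The layer route gives only $|I|\le 3\cdot 2=6$, because each $L_0(a,f)$ induces $2K_4$; the claim that ``combinations across layers are constrained'' is exactly what would have to be proved. In the spread route you never exhibit the four matrices, and you misstate the clique condition. Substitute $b=\alpha a+\beta f$, $e=\gamma a+\delta f$ (and likewise for $B,E$) into \eqref{commuting3}. The $aA$ and $fF$ terms cancel identically, and the two sides differ by $(\alpha+\delta)(aF-Af)$. So the only condition is $\delta=-\alpha$, i.e.\ trace zero; nothing constrains $\gamma-\beta$.

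The missing step is short. Over $\mathbb F_2$ set $\delta=\alpha=\beta+\gamma$:
\[
K'_{\beta,\gamma}=\{(a,(\beta+\gamma)a+\beta f,c,0,\gamma a+(\beta+\gamma)f,f):(a,f)\neq(0,0),\ c\in\mathbb F_2\},\qquad (\beta,\gamma)\in\mathbb F_2^2 .
\]
Each $K'_{\beta,\gamma}$ is a clique because $\delta=-\alpha$ in characteristic $2$. Take $(\beta,\gamma)\neq(\beta',\gamma')$ and put $r=\beta-\beta'$, $s=\gamma-\gamma'$. The two sets share a vertex only if $\begin{pmatrix} r+s & r\\ s & r+s\end{pmatrix}$ kills some $(a,f)\neq(0,0)$. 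Its determinant is $r^2+rs+s^2$, the norm form of $\mathbb F_4$, which equals $1$ for every $(r,s)\neq(0,0)$. Hence the four cliques are pairwise disjoint. Each has $3\cdot 2=6$ vertices and $4\cdot 6=24=|\mathcal L_0^*|$, so they partition $\mathcal L_0^*$. Your counting argument then gives $\alpha(\Gamma[\mathcal L_0^*])\le 4=p^2$.

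This is the concrete form of your ``irreducible quadratic in place of $x^2+\eta$''. The paper's family $\gamma=\eta\beta$ fails over $\mathbb F_2$ for both $\eta\in\{0,1\}$, whereas tying $\alpha$ to $\beta+\gamma$ gives an anisotropic plane.
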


\begin{proof}
By Theorem \ref{clique_decomposition},
$\mathcal L_0^*$ is a disjoint union of cliques $K_{\alpha, \beta}$. Hence any independent set in \(\Gamma[\mathcal L_0^*]\) meets each \(K_{\alpha,\beta}\) in at most one vertex. Since there are exactly \(p^2\) such cliques, it follows that
\(
\alpha\bigl(\Gamma[\mathcal L_0^*]\bigr)\le p^2.
\)
\end{proof}
\begin{proposition}{\label{upper}}
\(
\alpha\bigl(\Gamma[\mathcal{L}_0^*]\bigr)\geq 3p-2.
\)
\end{proposition}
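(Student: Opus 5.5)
The plan is to construct an explicit independent set of size $3p-2$ inside $\mathcal L_0^*$ using only the commuting condition \eqref{commuting3}. For $x=(a,b,c,0,e,f)$ and $y=(A,B,C,0,E,F)$ in $\mathcal L_0^*$, the conditions \eqref{commuting1} and \eqref{commuting2} hold automatically, so
\[
x\sim y \iff aE+bF=Ae+Bf .
\]
I would set $c=0$ throughout and take vertices from three (or more) zero-layers $L_0(a,f)$. Inside one layer, the first theorem of Section~5 shows that two vertices commute iff $a(E-e)=f(B-b)$. So each layer contributes at most $p$ pairwise noncommuting vertices, one from each level set of $\phi(b,e)=ae-fb$.

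\textbf{Step 1: parametrise candidate families.} I would fix three layers, for instance $L_0(1,0)$, $L_0(0,1)$ and $L_0(1,1)$ (or several cosets in the same direction, such as $L_0(\lambda,0)$, which gives extra freedom). In each layer I would take a one-parameter family whose $(b,e)$ coordinates are affine in the parameter. Examples are $x_t=(1,0,0,0,t,0)$, $y_s=(0,s,0,0,u,1)$ and $z_r=(1,r,0,0,w(r),1)$. Each family has the form $\{v_\tau\}$ with $\tau$ running over a subset of $\mathbb F_p$.

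\textbf{Step 2: reduce to conditions on the index sets.} Substituting into the displayed criterion turns each adjacency into a linear relation between parameters. Sample computations give the following.
\begin{itemize}
\item $x_t\sim x_{t'}$ iff $t=t'$.
\item $x_t\sim y_s$ iff $u=0$.
\item $x_t\sim z_r$ iff $w(r)=t$.
\item $y_s\sim z_r$ iff $s=u+r$.
\item $z_r\sim z_{r'}$ iff $w(r')-w(r)=r'-r$.
\end{itemize}
Independence therefore becomes a short list of avoidance conditions: index sets $T,S,R\subseteq\mathbb F_p$ must avoid certain affine images of one another. The target is to choose $w$, $u$, the layers and the index sets so that these conditions allow sizes $p,\,p-1,\,p-1$. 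Each excluded value should cost at most one element in two of the families, which is what the count $3p-2=p+(p-1)+(p-1)$ suggests. Within $z$, I would take $w$ non-affine-with-slope-$1$, for example $w(r)=\mu r$ with $\mu\neq1$.

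\textbf{Step 3: verify and count.} Once suitable parameters are found, I would list the $3p-2$ vertices explicitly and check all pairs, within families and across families, against \eqref{commuting3}. I would also check that every chosen $(a,f)\neq(0,0)$, so that all vertices lie in $\mathcal L_0^*$. The case $p=2$ would be checked separately by hand, where $3p-2=4$.

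\textbf{Main obstacle.} The hard step is Step~2: choosing the parameters. With three layers in three different directions and affine families, the cross conditions tend to conflict. For example, the $x$--$z$ and $y$--$z$ conditions each require a whole index set to miss an affine image of another. This can force $|T|+|R|\le p$, which only yields about $2p$. To get around this, I would first use two cosets in the \emph{same} direction, for instance $L_0(1,0)$ and $L_0(\lambda,0)$. There the cross condition is $e'=\lambda e$ when $b=b'=0$, so I can take $e$-values whose $\lambda$-multiples avoid the other set, with $\lambda$ chosen so that multiplication by $\lambda$ moves $\mathbb F_p^*$ off itself appropriately. If needed, I would split a family so that its members come from several such cosets rather than from a single layer. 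This ensures each cross-layer constraint removes only a single value.
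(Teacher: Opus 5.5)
Your proposal stops exactly where the proof has to happen. No independent set is ever exhibited, and the families you do write down cannot reach the bound. With $x_t=(1,0,0,0,t,0)$, $y_s=(0,s,0,0,u,1)$, $z_r=(1,r,0,0,w(r),1)$, your own adjacency computations give $|T|\le p-|w(R)|$ and $|S|\le p-|R|$. Hence $|T|+|S|+|R|\le 2p<3p-2$ for $p\ge 3$.

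The repair you sketch aims at the wrong constraint. Inside the direction $\langle(1,0)\rangle$, the zero-layer vertices $(a,b,c,0,e,0)$ commute iff $aE=Ae$. So they form $p$ cliques indexed by the slope $e/a$, and any family with distinct slopes is already independent there. The binding conditions are the cross-direction ones ($x$--$z$ and $y$--$z$). Nothing in your plan explains how spreading a family over cosets $L_0(\lambda,0)$ would make each of these cost only one value. The sentence ``this ensures each cross-layer constraint removes only a single value'' is an assertion, not an argument.

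The missing idea is to choose the families so that every cross-family instance of \eqref{commuting3} has \emph{no} solutions with parameters in $\mathbb F_p^\times$, rather than being an avoidance condition between index sets. The paper does this by taking a coset coordinate as the varying parameter with $(b,e)$ fixed, so that cross terms appear as products. Its families are $X=\{(a,1,0,0,1,0)\}$, $Y=\{(0,1,0,0,0,f)\}$ and $Z=\{(1,0,0,0,t,1)\}$ with $a,f,t\in\mathbb F_p^\times$.
\begin{itemize}
\item Within each family, \eqref{commuting3} reduces to $a=a'$, $f=f'$, $t=t'$, so each family is independent.
\item Across families it reduces to $f=0$, $at=0$, $1=0$, all impossible, so all $3(p-1)$ vertices survive.
\item One more vertex $w=(u,0,0,0,0,v)$ with $u\ne v$ in $\mathbb F_p^\times$ is nonadjacent to all of them, since the conditions become $u=v$, $v=0$, $ut=0$. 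This gives $3p-2$.
\end{itemize}

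Your plan to treat $p=2$ separately is well founded. There $u\neq v$ in $\mathbb F_2^\times$ is impossible, so this construction covers only odd $p$. For $p=2$ a different set is needed, for example $(1,0,0,0,0,0)$, $(1,0,0,0,1,0)$, $(0,0,0,0,1,1)$, $(0,1,0,0,1,1)$, which are pairwise noncommuting.
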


\begin{proof}
Consider the subsets
\[
X=\{(a,1,0,0,1,0):a\in\mathbb{F}_p^\times\},
Y=\{(0,1,0,0,0,f):f\in\mathbb{F}_p^\times\},\]
\[Z=\{(1,0,0,0,t,1):t\in\mathbb{F}_p^\times\},
\]
and choose
\(
w=(u,0,0,0,0,v), u,v\in\mathbb{F}_p^\times,u\neq v.
\)
Set
\(
I=X\cup Y\cup Z\cup\{w\}.
\)
By direct substitution into \eqref{commuting3}, each of \(X\), \(Y\), and \(Z\) is an independent set. Now let
\(
x=(a,1,0,0,1,0)\in X,~y=(0,1,0,0,0,f)\in Y,~ z=(1,0,0,0,t,1)\in Z.
\)
Then \eqref{commuting3} for \(x\) and \(y\) gives \(f=0\), which is impossible. For \(x\) and \(z\), it gives \(at=0\), impossible since \(a,t\in\mathbb{F}_p^\times\). For \(y\) and \(z\), it gives \(1=0\), again impossible. Hence there are no edges between the distinct sets \(X\), \(Y\), and \(Z\). Finally, \eqref{commuting3} for \(w\) and a vertex of \(X\) gives \(u=v\), impossible by choice of \(w\). With a vertex of \(Y\), it gives \(v=0\), impossible, and with a vertex of \(Z\), it gives \(ut=0\), impossible. Thus \(w\) is nonadjacent to every vertex of \(X\cup Y\cup Z\). Therefore \(I\) is an independent set in \(\Gamma[\mathcal{L}_0^*]\). Since
\(
|I|=|X|+|Y|+|Z|+1=(p-1)+(p-1)+(p-1)+1=3p-2,
\)
we obtain
\(
\alpha\bigl(\Gamma[\mathcal{L}_0^*]\bigr)\geq 3p-2.
\)
\end{proof}
\section{Conclusion}
Since
\(
\alpha(\Gamma(G))=p^3-p+1+\alpha\bigl(\Gamma[L_0^*]\bigr),
\)
Corollary \ref{lower} and Proposition \ref{upper} yield
\(
p^3+2p-1\le \alpha(\Gamma(G))\le p^3+p^2-p+1.
\)
Thus the determination of the independence number of the reduced commuting graph is reduced to the determination of \(\alpha\bigl(\Gamma[\mathcal{L}_0^*]\bigr)\). In particular, further progress depends on a sharper analysis of the interaction between the cliques \(K_{\alpha,\beta}\) inside \(\mathcal{L}_0^*\), or equivalently on the associated reduced block graph.

%================================================

\bibliographystyle{elsarticle-num}
\bibliography{references}
\end{document}